\author{Boris Bilich}
\address{Department of Mathematics, HSE University, Usacheva str. 6, 119048, Moscow, Russian Federation}
\email{bilichboris1999@gmail.com}
\title[Noncommutative monoid structures on affine surfaces]{Classification of noncommutative monoid structures on normal affine surfaces}
\subjclass[2020]{Primary 20M32, 14M25; \ Secondary 14R20, 20G15, 20F16}
\keywords{Algebraic monoid, toric variety, solvable algebraic group, Demazure root, grading, locally nilpotent derivation}
\theoremstyle{plain}      
\newtheorem{theorem}{Theorem}[section]     
\newtheorem{corollary}[theorem]{Corollary}     
\newtheorem{lemma}[theorem]{Lemma}     
\newtheorem{proposition}[theorem]{Proposition}
\theoremstyle{remark}      
\newtheorem{example}[theorem]{Example} 
\newtheorem{remark}[theorem]{Remark} 
\newtheorem*{notation}{Notation}
\newtheorem*{acknowledgments}{Acknowledgments}
\theoremstyle{definition}
\newtheorem{definition}[theorem]{Definition} 
\newtheorem{construction}[theorem]{Construction}
\newcommand{\KK}{\mathbb{K}}
\newcommand{\ZZ}{\mathbb{Z}}
\newcommand{\GG}{\mathbb{G}}
\newcommand{\QQ}{\mathbb{Q}}
\newcommand{\tx}{\mathtt{x}}
\newcommand{\ty}{\mathtt{y}}
\newcommand{\tz}{\mathtt{z}}
\DeclareMathOperator{\Spec}{Spec}
\DeclareMathOperator{\Aut}{Aut}
\DeclarePairedDelimiter\ceil{\lceil}{\rceil}
\begin{document}
\begin{abstract}
    In 2021, Dzhunusov and Zaitseva classified two-dimensional normal affine commutative algebraic
    monoids. In this work, we extend this classification to noncommutative monoid structures on
    normal affine surfaces. We prove that two-dimensional algebraic monoids are toric. We also show
    how to find all monoid structures on a normal toric surface. Every such structure is induced by
    a comultiplication formula involving Demazure roots. We also give descriptions of opposite
    monoids, quotient monoids, and boundary divisors.
\end{abstract}
\maketitle

\section{Introduction}
An algebraic monoid is an irreducible variety $X$ together with an associative multiplication map
$\mu\colon X\times X\to X$, which is a morphism of algebraic varieties and a unit element $e\in X$.
The theory of algebraic monoids pioneered in 1980s in the works by Putcha and Renner (see
introductory monographs \cite{P88,R05}). Many contributions in the theory were made since then. The
study of algebraic monoids involved topics from algebraic geometry, representation theory, and
combinatorics. Affine algebraic monoids, i.e., the monoids, which are affine varieties, were of
particular interest.

Let $\KK$ be an algebraically closed field of characteristic zero. We assume all varieties to be
defined over $\KK$. The group of invertible elements $G(X)$ is open in the algebraic monoid~$X$.
Rittatore proved in \cite{R07} that an algebraic monoid is affine if and only if its group of
invertible elements is affine. If $G(X)$ is an algebraic torus then $X$ is called a toric monoid
and it is fully determined by the underlying variety (see \cite{N92}). The classification of
algebraic monoids with reductive $G(X)$ was a result of works of several authors (see
\cite{V95,R05}). If the group $G(X)$ is unipotent, then the monoid $X$ is the group itself. It
follows from the fact that orbits of unipotent group action on affine varieties are closed
\cite[Theorem 7.2.1]{SR17}. 

However, there were no attempts to classify general affine algebraic monoids even in low dimensions
until recently.  In \cite{ABZ20}, the authors studied commutative monoid structures on affine
surfaces.  They gave explicit formulas for possible multiplications on $\mathbb{A}^1$,
$\mathbb{A}^2$, and~$\mathbb{A}^3$. Dzhunusov and Zaitseva classified two-dimensional normal
affine commutative monoids in \cite{DZ21}. Besides toric monoids and the two-dimensional additive
group there are monoids with $G(X)\cong \GG_a\times \GG_m$. All such monoids are isomorphic to a
toric variety $X$ with a comultiplication 
\[
    \mu^*\colon \chi^u\mapsto \chi^u\otimes \chi^u(1\otimes \chi^e+\chi^e\otimes 1)^{\langle p,
    u\rangle}
\]
where $e$ is Demazure root associated to the ray generator $p$ of the cone of $X$. 

The aim of this work is to provide a full classification of two-dimensional normal affine
noncommutative monoids over $\KK$.  Together with the result of Dzhunusov and Zaitseva it finishes
the classification of monoid structures on normal affine surfaces. 

Let $X$ be a two-dimensional normal affine noncommutative algebraic monoid. Then $G(X)$ is a
two-dimensional noncommutative connected linear algebraic group. Such group is isomorphic to the
semidirect product $G_n=\GG_a\rtimes\GG_m$ for the action of $\GG_m$ on $\GG_a$ by the
multiplication on the $n$-th power for some positive integer $n$.

First of all, we prove that every two-dimensional (not necessarily normal) affine algebraic monoid
is toric (Theorem \ref{t:monistor}). It extends the result of \cite{DZ21} by dropping the
assumptions of normality and commutativity (see also \cite{AK15}). 

Then, we give a classification of two-dimensional noncommutative normal affine algebraic monoids in
terms of convex cones. The group algebra of $\KK[X]$ is a subalgebra of $\KK[G(X)]$. This
subalgebra should satisfy some conditions. Since $X$ is a toric surface, the subalgebra $\KK[X]$ is
given by some convex cone. The action of the additive subgroup $\GG_a\subset G(X)$ gives some
restrictions on the possible form of such cone (Lemma \ref{l:conetriangle}). We construct the
series of convex cones (Construction \ref{def:xynab}) satisfying those restrictions. We prove that
these cones induce non-isomorphic monoids and that every monoid is isomorphic to one of this form
(Theorem \ref{t:convconeclass}). 

We also present another approach to the classification problem. We show how to find all monoid
structures on a normal toric surface up to isomorphism. Every such structure is given by a
comultiplication
\[
    \mu^*\colon \chi^u\mapsto \chi^u\otimes \chi^u(1\otimes \chi^{e_1}+\chi^{e_2}\otimes 1)^{\langle p,
    u\rangle}
\]
where $(e_1, e_2)$ is a pair of Demazure roots associated to the ray generator $p$ (Theorem
\ref{t:demazclass}). It is a direct generalization of the comultiplication from \cite{DZ21}.

In Section \ref{s:algprop} we study algebraic properties of monoids. We describe the opposite
monoid and the quotients of monoids. Theorem \ref{t:noninv} gives a description of the subset of
non-invertible elements (the boundary divisor). We prove that it is isomorphic to $\mathbb{A}^1$ as
a variety and that there is a zero element in it (Theorem \ref{t:noninv}).

\begin{acknowledgments}
    The author is grateful to Ivan Arzhantsev for the formulation of the problem and his guidance
    during the creation of this work and to Yulia Zaitseva for useful remarks. 
\end{acknowledgments}

\section{Preliminaries}
\subsection{Notation}
Throughout the paper we denote by $\KK$ an algebraically closed field of characteristic zero. All
algebraic varieties are assumed to be defined over $\KK$ and irreducible. The additive and
multiplicative groups of $\KK$ are denoted by $\GG_a$ and $\GG_m$ respectively. We denote by $T$
the two-dimensional algebraic torus $\GG_m^2$.

\subsection{Toric surfaces}
An \emph{affine toric surface} is an affine surface $Z$ together with an open embedding of
the algebraic torus $\iota_Z\colon T\hookrightarrow Z$ such that the action of $T$ on itself extends to
an algebraic action on $Z$.  A classification of normal affine toric surfaces is well known. We
briefly recall some facts which we use later.  We refer the reader to the excellent
textbook~\cite{CLS11} for details. 

Let $M\cong \ZZ^2$ be the character lattice of $T$ and $N$ be the dual lattice of one-parametric
subgroups. For $u\in M$ we denote by $\chi^u$ the character corresponding to $u$.  We denote by
$\langle \cdot,\cdot  \rangle\colon M\times N \to \ZZ$ the natural pairing between $M$ and $N$.

Let $\sigma$ be a rational strongly convex cone in $N_\QQ = N\otimes_\ZZ \QQ$, that is, a rational
convex cone with no linear subspace in it. Then the dual cone in $M_\QQ = M\otimes_\ZZ \QQ$ is
defined as $\sigma^\vee = \{ w \in M_\QQ \colon \langle w, v \rangle \geq 0 \text{ for all } v \in
\sigma\}$.  Let $\KK[\sigma^\vee \cap M]=\oplus_{u\in\sigma^\vee \cap M} \KK \chi^u$ be the linear
span of the characters from $\sigma^\vee \cap M$ in $\KK[T]$. Then $\KK[\sigma^\vee \cap M]$ is a
finitely generated $T$-invariant subalgebra of $\KK[T]$ and $Z_\sigma = \Spec \KK[\sigma^\vee \cap
M]$ is a normal toric surface with respect to the action of $T$.

On the other hand, suppose that $Z$ is a normal affine toric surface.  Consider the set of
characters of $T$-action on $\KK[Z]$ and take its convex hull $\sigma^\vee_Z$ in $M_\QQ$. Then
$\sigma^\vee_Z$ is a dual cone of some strongly convex cone $\sigma_Z \subset N_\QQ$ and $Z$ is
canonically isomorphic to $Z_{\sigma_Z}$.

A toric surface $Z$ can be also described in terms of $M$-grading on $\KK[Z]$. A nonzero function
$f\in \KK[Z]$ is homogeneous of degree $u \in M$ if and only if $T$ acts on $f$ by
multiplication by character $\chi^u$. Then every homogeneous part of $\KK[Z]$ has dimension at most
one. 

A morphism between toric surfaces $\varphi\colon Z_1\to Z_2$ is called \emph{toric} if there
exists a homomorphism of groups $\varphi_T \colon T \to T$ such that the following diagram is
commutative:
\[
    % https://tikzcd.yichuanshen.de/#N4Igdg9gJgpgziAXAbVABwnAlgFyxMJZABgBpiBdUkANwEMAbAVxiRAA0QBfU9TXfIRQBGclVqMWbAJrdeIDNjwEiZYePrNWiEABU5fJYKKj11TVJ36u4mFADm8IqABmAJwgBbJGRA4ISKISWmwAOqH0bmgAFlgGIO5eSABM1P5IAMzmktog4ZExWAD61vKJ3oipfgGIvjh0WAxs0RAQANYg2SE64fj1RZw8rh4VWdWBaQ1NOi3tncGWeaF9dEWyNlxAA
    \begin{tikzcd}
        Z_1 \arrow[r, "\varphi"]                              & Z_2                            \\
        T \arrow[r, "\varphi_T"] \arrow[u, "\iota_{Z_1}", hook] & T \arrow[u, "\iota_{Z_2}", hook]
    \end{tikzcd}
\]
The map $\varphi_T$ induces a linear map of character lattices $\widehat{\varphi}\colon M\to M$.
Tensoring with $\QQ$ we obtain a map $\widehat{\varphi}_\QQ\colon M_\QQ \to M_\QQ$. If
$\sigma^\vee_1,\sigma^\vee_2\subset M$ are cones for $Z_1$ and $Z_2$ then
$\widehat{\varphi}_\QQ(\sigma_2)\subset \sigma_1$. Conversely, given a linear map
$\widehat{\varphi}\colon M\to M$ with $\widehat{\varphi}_\QQ(\sigma_2)\subset \sigma_1$ we have a
unique toric morphism $\varphi\colon {Z_1}\to {Z_2}$, which induces $\widehat\varphi$. 

\subsection{Demazure roots}
A derivation $\delta$ on a commutative algebra $A$ is called \emph{locally nilpotent} if for every
element $f\in A$, there exists a positive integer $n$ such that $\delta^n (f)=0$. For a toric
surface $Z$, the locally nilpotent derivation $\delta$ on $\KK[Z]$ is called \emph{homogeneous} if
it maps homogeneous elements to homogeneous. There exists a unique element $e\in M$ called the
\emph{degree} of $\delta$ such that $\delta(\chi^u)=c(u)\chi^{u+e}$, where $c(u)$ is an element of
$\KK$, which depends on $u$. 

Homogeneous locally nilpotent derivations on a toric surface have a nice combinatorial description.
Let $\sigma\subset N$ be a two-dimensional strongly convex cone. Let $p_1$ and $p_2$ be the ray
generators of $\sigma$, that is, the primitive lattice points on two edges of $\sigma$. 
\begin{definition}
   An element $e \in M$ is called a \emph{Demazure root} associated to $p_i$ if $\langle e, p_i
   \rangle = -1,$ and $\langle e, p_j \rangle \geq 0$ for $j\neq i$.
\end{definition}
A Demazure root $e$ associated to $p_i$ defines a locally nilpotent derivation $\delta_e$ by 
\[
    \delta_e(\chi^u)=\langle u, p_i \rangle\chi^{u+e}.
\]
\begin{example}
    Let $\sigma\subset N$ be a strongly convex cone with ray generators $p_1=(1,0)$ and
    $p_2=(-b, a)$ for integers $b\geq 0$ and $a>0$. The Demazure roots associated to $p_1$, are
    given by
    \[
        e_\ell = (-1, \ell) \in M
    \]
    for $a\ell + b \geq 0$.
\end{example}

\subsection{Algebraic monoids}
An \emph{affine algebraic monoid} is an affine variety $X$ together with a multiplication
morphism $\mu \colon X\times X\to X$ and a unit $\eta\colon \Spec\KK \to X$, which together satisfy
the axioms of an associative product with unit. Equivalently, the algebra $\KK[X]$ is a bialgebra
with a comultiplication $\mu^*$ and a counit $\eta^*$.

The subgroup of invertible elements of $X$ is denoted by $G(X)$. 
\begin{proposition} \label{prop:groupofmonoid}
    The group of invertible elements $G(X)$ is an affine open subset of the affine algebraic monoid
    $X$. 
\end{proposition}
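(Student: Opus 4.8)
The plan is to show two things: that $G(X)$ is open in $X$, and that it is itself an affine variety.

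For openness, I would first recall that $X$ is irreducible and that the multiplication $\mu$ is associative with two-sided unit $e$. The key observation is that invertibility can be detected by a single morphism. Consider the left-translation maps $\lambda_x \colon X \to X$, $y \mapsto \mu(x,y)$; an element $x$ is invertible precisely when $\lambda_x$ is a dominant morphism (equivalently, an automorphism of $X$ as a variety), since $X$ is irreducible. A cleaner route: the function $x \mapsto \det(\lambda_x)$ on some faithful representation, or more intrinsically, embed $X$ into $\mathrm{End}(V)$ using that every affine algebraic monoid admits a faithful finite-dimensional representation $X \hookrightarrow \mathrm{End}(V)$ as a monoid (this follows from $\KK[X]$ being a finitely generated bialgebra: take a finite-dimensional $\mu^*$-invariant subspace $W \subset \KK[X]$ that generates, giving a monoid morphism $X \to \mathrm{End}(W^*)$; faithfulness because $W$ generates). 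Under such an embedding, $X \subset \mathrm{End}(V)$ is a closed submonoid, and $G(X) = X \cap \mathrm{GL}(V) = X \setminus \{ \det = 0\}$, which is a principal open subset of $X$, hence open.

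For affineness, once we know $G(X) = X \cap \mathrm{GL}(V)$ is the complement of the zero set of the single regular function $\det|_X \in \KK[X]$, it is a principal open subset $X_{\det}$ of the affine variety $X$, and principal open subsets of affine varieties are affine with coordinate ring $\KK[X][1/\det]$. This handles both claims at once. I would also remark that $G(X)$ is nonempty since $e \in G(X)$, and that it is a group under the restriction of $\mu$ (associativity is inherited, the unit is $e$, and inverses exist by definition of $G(X)$), with the inversion map being a morphism because on $\mathrm{GL}(V)$ inversion is a morphism and $G(X)$ is closed in $\mathrm{GL}(V)$ — actually one should check $G(X)$ is closed in $\mathrm{GL}(V)$: it equals $X \cap \mathrm{GL}(V)$ with $X$ closed in $\mathrm{End}(V)$, so yes.

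The main obstacle is constructing the faithful representation $X \hookrightarrow \mathrm{End}(V)$, i.e., verifying that a finitely generated bialgebra over $\KK$ admits a finite-dimensional subcomodule that generates it as an algebra; this is the standard argument that any affine algebraic monoid is a closed submonoid of some $\mathrm{End}(V)$, and I would cite it from \cite{R05} or \cite{P88} rather than reprove it. The only subtlety is that $\mu^*$ need not be cocommutative, but the construction of finite-dimensional subcomodules only uses finite generation of $\KK[X]$ and the counit axiom, so noncommutativity of the monoid causes no trouble. Everything else (principal opens are affine, $\det$ cutting out the non-invertible locus) is routine.
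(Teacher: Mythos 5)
Your route is genuinely different from the paper's: the paper simply cites Rittatore (\cite[Theorem~1]{R98}) for openness of $G(X)$ and the fact that a quasi-affine algebraic group is affine (\cite[Theorem~8.5.4]{SR17}) for affineness, whereas you linearize the monoid, $X\hookrightarrow \mathrm{End}(V)$, and try to exhibit $G(X)$ as the principal open set $\{\det\neq 0\}$. That is the classical Putcha--Renner strategy and it does work in the affine case, with the bonus that affineness comes for free once openness is established as a principal open.

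However, as written there is a genuine gap at the crux: the equality $G(X)=X\cap \mathrm{GL}(V)$. The inclusion $G(X)\subseteq X\cap\mathrm{GL}(V)$ is clear, but the reverse inclusion is exactly the nontrivial point: if $x\in X$ has $\det x\neq 0$, you only know that $x$ has an inverse in $\mathrm{GL}(V)$, and you must show that this inverse lies in $X$, i.e.\ that $x$ is invertible \emph{in the monoid}. Your earlier ``$x$ is invertible iff $\lambda_x$ is dominant'' suffers from the same problem (density of $xX$ in $X$ does not by itself produce an inverse), and irreducibility, which you invoke only in passing, is precisely what is needed here. The standard fix: for $x\in X\cap\mathrm{GL}(V)$, left multiplication by $x$ is a linear automorphism of $\mathrm{End}(V)$, so $xX$ is a closed irreducible subvariety of $X$ of dimension $\dim X$; since $X$ is irreducible, $xX=X$, hence $xy=1$ for some $y\in X$, and symmetrically $zx=1$ for some $z\in X$, so $y=z$ is a two-sided inverse in $X$. (Alternatively, cite this fact directly from \cite{P88} or \cite{R05}, where the openness of the unit group of an irreducible linear algebraic monoid is proved in exactly this way.) With that inclusion supplied, the rest of your argument -- the existence of the closed monoid embedding into $\mathrm{End}(V)$, which you rightly cite rather than reprove, and the observation that a principal open subset of an affine variety is affine -- is correct and gives both claims at once.
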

\begin{proof}
    By Theorem 1 in \cite{R98}, $G(X)$ is an open subset of $X$. It is well known that every
    quasi-affine algebraic group is affine (see \cite[Theorem 8.5.4]{SR17}), so $G(X)$ is an affine open subset. 
\end{proof}

\begin{example} \label{ex:dz21monoids}
   Let $\sigma\subset N$ be a strongly convex cone. Suppose that $p\in N$ is a ray generator of
   $\sigma$ and $e\in M$ is a Demazure root, associated to $p$. Then the formula
   \[
       \chi^u\mapsto \chi^u\otimes \chi^u (1\otimes \chi^{e}+\chi^{e}\otimes 1)^{\langle p_i, u
       \rangle}
   \]
   defines a comultiplication on $\KK[\sigma^\vee \cap M]$. We call it the Dzhunusov-Zaitseva
   comultiplication. This comultiplication turns $Z_\sigma$ into a monoid. These monoids were
   studied in \cite{DZ21}.
\end{example}

\subsection{Two-dimensional solvable groups}
In order to classify noncommutative two-dimensional monoids we should start with two-dimensional
groups. Fortunately, all such groups are well-known and can be described easily.

\begin{notation}
   For every positive integer $n$ we denote by $G_n$ the semidirect product $\GG_a \rtimes \GG_m$, where
   $\GG_m$ acts on $\GG_a$ by multiplication on $n$-th power. Explicitly, the product in $G_n$ is
   given by
   \[
       (\alpha_1, \tau_1)\cdot(\alpha_2, \tau_2) = (\alpha_1 + \tau_1^n \alpha_2, \tau_1\tau_2).
   \]
\end{notation}
The group algebra of $G_n$ is $\KK[x, y, y^{-1}]$, where $x$ is a coordinate on $\GG_a$ and $y$ is
a coordinate on $\GG_m$. The comultiplication is given by 
\[
    \begin{split}
        x &\mapsto x\otimes 1 + y^n\otimes x, \\
        y &\mapsto y\otimes y.
    \end{split}
\]
On an arbitrary monomial $x^ay^b$ with integer $b$ and non-negative integer $a$ we have
\begin{equation} \label{eq:comultipongn}
    x^ay^b\mapsto (x\otimes 1 + y^n\otimes x)^a (y\otimes y)^b.
\end{equation}

\begin{proposition} \label{prop:clasof2groups}
   Let $G$ be a two-dimensional noncommutative connected linear algebraic group. Then $G$ is
   isomorphic to $G_n$ for some positive integer $n$.
\end{proposition}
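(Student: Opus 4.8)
The plan is to classify two-dimensional noncommutative connected linear algebraic groups $G$ by exploiting solvability and the structure theory of linear algebraic groups over an algebraically closed field of characteristic zero. First I would observe that $G$ is solvable: a two-dimensional connected linear algebraic group cannot be semisimple or reductive nonabelian (the smallest semisimple group $\mathrm{PSL}_2$ has dimension $3$), and a connected linear algebraic group that is not solvable contains a closed subgroup isomorphic to $\mathrm{SL}_2$ or $\mathrm{PSL}_2$ by standard structure theory, which is impossible for dimension reasons. Hence $G$ is connected solvable, so by the Lie–Kolchin theorem it is trigonalizable, and it has a decomposition $G = G_u \rtimes S$ where $G_u$ is the unipotent radical and $S$ is a maximal torus.

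Next I would do the dimension bookkeeping. Since $G$ is noncommutative, it is nonabelian, so neither $G_u$ nor $S$ can be all of $G$: a connected unipotent group of dimension $\le 2$ in characteristic zero is commutative (it is isomorphic to $\GG_a$ or $\GG_a^2$ as a variety and as a group, since two-dimensional connected unipotent groups are abelian), and a torus is abelian. Therefore $\dim G_u = \dim S = 1$, giving $G_u \cong \GG_a$ and $S \cong \GG_m$, and $G \cong \GG_a \rtimes \GG_m$ for some algebraic action of $\GG_m$ on $\GG_a$ by group automorphisms. The remaining task is to identify all such actions.

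The key computation is that every algebraic group automorphism of $\GG_a$ is of the form $\alpha \mapsto c\alpha$ for some $c \in \KK^\times$ — indeed $\Aut(\GG_a) \cong \GG_m$, since a group endomorphism of $\GG_a = \Spec\KK[t]$ sends $t$ to an additive polynomial, which in characteristic zero must be linear. Thus an algebraic action of $\GG_m$ on $\GG_a$ is given by a homomorphism $\GG_m \to \Aut(\GG_a) \cong \GG_m$, i.e. $\tau \mapsto (\alpha \mapsto \tau^n\alpha)$ for some integer $n \in \ZZ$. The case $n = 0$ gives the direct product, which is commutative and excluded. Replacing $\tau$ by $\tau^{-1}$ if necessary (an automorphism of $\GG_m$), we may assume $n > 0$. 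This exhibits $G \cong G_n$ for a positive integer $n$, as desired.

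The main obstacle, and the step requiring the most care, is the reduction to solvability: one must justify that a two-dimensional connected linear algebraic group is automatically solvable. The cleanest route is to invoke that a non-solvable connected linear algebraic group has a Borel subgroup of positive dimension with a nontrivial semisimple quotient structure, forcing dimension at least $3$; alternatively, one cites directly that every connected linear algebraic group of dimension $\le 2$ is solvable, which is a standard fact (for instance via the classification of connected groups of small dimension, or because $\mathrm{SL}_2$ is the minimal nonsolvable connected linear algebraic group). Everything after that — the semidirect product decomposition, the dimension count forcing $\GG_a \rtimes \GG_m$, and the determination $\Aut(\GG_a) \cong \GG_m$ — is routine in characteristic zero.
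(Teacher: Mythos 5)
Your proof is correct and follows essentially the same route as the paper: establish solvability (the paper via Levi--Malcev and the classification of reductive groups, you via the dimension bound for nonsolvable groups, which amounts to the same point), apply the structure theorem for connected solvable groups to get $\GG_a\rtimes\GG_m$, and identify the action of $\GG_m$ on $\GG_a$. Your write-up even adds the final step that the paper leaves implicit, namely $\Aut(\GG_a)\cong\GG_m$ in characteristic zero and the normalization $n>0$ via $\tau\mapsto\tau^{-1}$.
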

\begin{proof}
    The group $G$ is isomorphic to a semidirect product of reductive and solvable subgroups by
    {Levi-Malcev} decomposition. By the classification of reductive groups we deduce that $G$ is
    solvable.

    If $G$ is unipotent, then it has a center of dimension at least one. But $G$ has dimension two,
    so it is commutative. That means that $G$ is not unipotent.

    By the structure theorem for solvable groups, $G$ is a semidirect product of its maximal torus
    and the unipotent radical. Both must have dimension $1$ so it is a semidirect product of
    $\GG_m$ and $\GG_a$, which is isomorphic to $G_n$ for some $n$. For the details about the
    structure of solvable groups see \cite[Chapter VII]{H75}.
\end{proof}
\begin{proposition} \label{prop:centreofgn}
    The center $\mathcal{Z}(G_n)$ of the group $G_n$ is a cyclic group of order $n$. It is generated by an element
    $(0, \xi_n)$, where $\xi_n$ is a primitive $n$-th root of unity. Moreover, if $C_m \subset
    \mathcal{Z}(G_n)$ is a subgroup of order $m$, then
    \[
        G_n/C_m\cong G_{k},
    \]
    where $k=n/m$.
\end{proposition}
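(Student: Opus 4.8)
The plan is to read off the center directly from the explicit multiplication formula for $G_n$, and then to realize the quotient $G_n/C_m$ as the image of an explicit surjective homomorphism onto $G_k$.

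First I would determine $\mathcal{Z}(G_n)$. An element $(\alpha, \tau) \in G_n$ is central precisely when $(\alpha, \tau)\cdot(\beta, s) = (\beta, s)\cdot(\alpha, \tau)$ for all $(\beta, s) \in G_n$, which by the product formula means $\alpha + \tau^n\beta = \beta + s^n\alpha$ for all $\beta \in \KK$ and all $s \in \KK^*$. Setting $\beta = 0$ and letting $s$ vary gives $\alpha = s^n\alpha$ for every $s$, hence $\alpha = 0$ (otherwise $s^n = 1$ would hold for all $s$ in the infinite group $\KK^*$, which is impossible since $n \geq 1$). Substituting $\alpha = 0$ back in yields $\tau^n\beta = \beta$ for all $\beta$, i.e. $\tau^n = 1$. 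Therefore $\mathcal{Z}(G_n) = \{(0,\tau) : \tau^n = 1\}$, a cyclic group of order $n$ generated by $(0,\xi_n)$ for any primitive $n$-th root of unity $\xi_n$.

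Since $\mathcal{Z}(G_n) \cong \ZZ/n\ZZ$ is cyclic, for each divisor $m$ of $n$ it contains a unique subgroup $C_m$ of order $m$, namely $C_m = \{(0,\tau) : \tau^m = 1\}$. To identify the quotient, put $k = n/m$ and consider the map $\phi\colon G_n \to G_k$, $(\alpha, \tau) \mapsto (\alpha, \tau^m)$. A direct computation using the product formulas in $G_n$ and $G_k$ together with the identity $mk = n$ shows that $\phi$ is a homomorphism of algebraic groups: both $\phi\big((\alpha_1,\tau_1)(\alpha_2,\tau_2)\big)$ and $\phi\big((\alpha_1,\tau_1)\big)\cdot\phi\big((\alpha_2,\tau_2)\big)$ equal $(\alpha_1 + \tau_1^n\alpha_2,\ \tau_1^m\tau_2^m)$. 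The map $\phi$ is surjective because $\KK$ is algebraically closed, so every $s \in \KK^*$ admits an $m$-th root, and its kernel is exactly $\{(\alpha,\tau) : \alpha = 0,\ \tau^m = 1\} = C_m$. As $\phi$ is a morphism of varieties, it exhibits $G_k$ as the quotient of $G_n$ by $C_m$ in the category of algebraic groups, so $G_n/C_m \cong G_k$.

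The computations involved are elementary; the only step needing a little care is checking that $\phi$ is compatible with the twisted multiplications, which is precisely where the relation $n = mk$ is used, and observing that $\phi$ being an algebraic morphism upgrades the abstract isomorphism to an isomorphism of algebraic groups.
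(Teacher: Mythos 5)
Your proposal is correct and follows essentially the same route as the paper: compute the center directly from the multiplication formula to get $\{(0,\tau):\tau^n=1\}$, then use the surjective homomorphism $(\alpha,\tau)\mapsto(\alpha,\tau^m)$ onto $G_k$ with kernel $C_m$. Your write-up is in fact slightly more careful than the paper's on the specializations forcing $\alpha=0$ and on surjectivity via $m$-th roots in $\KK^*$.
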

\begin{proof}
    Suppose that $(\alpha, \tau)$ lies in the center of $G_n$. For an arbitrary $(\alpha',
    \tau')\in G_n$, we have $(\alpha, \tau)\cdot(\alpha', \tau')=(\alpha', \tau')\cdot (\alpha,
    \tau)$, so
    \[
        (\alpha + \tau^n \alpha', \tau \tau')=(\alpha' + {\tau'}^n\alpha, \tau \tau').
    \]
    Thus, we have $\alpha=0$ and $\tau^n=1$. It follows that the center consists of elements of the
    form $(0,\tau)$ with $\tau^n=1$ so it is generated by $(0,\xi_n)$.

    The subgroup $C_m\subset \mathcal{Z}(G_n)$ is generated by the element $(0,\xi_m)$ where
    $\xi_m=\xi_n^{k}$. Define a surjective homomorphism $\pi\colon G_n\to G_{k}$ by
    $\pi(\alpha, \tau)=(\alpha, \tau^m)$. Let us check that $\pi$ is a group homomorphism. For
    $(\alpha, \tau),(\alpha', \tau')\in G_n$, we have
    \[
        \pi((\alpha, \tau)\cdot(\alpha', \tau'))=\pi(\alpha + \tau^n \alpha', \tau \tau')=(\alpha +
        \tau^n \alpha', \tau^m (\tau')^m)=\pi(\alpha, \tau)\cdot\pi(\alpha', \tau').
    \]
    The kernel of $\pi$ consists of the elements of the form $(0,\tau)$ with $\tau^m=1$, so
    $\ker\pi=C_m$. Thus, $\pi$ descends to the isomorphism $G_n/C_m\cong G_{k}$.
\end{proof}
\begin{corollary}
    The groups $G_{n}$ and $G_{n'}$ are isomorphic if and only if $n=n'$. 
\end{corollary}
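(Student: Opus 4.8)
The plan is to read off the statement directly from Proposition \ref{prop:centreofgn}. The ``if'' direction is immediate: if $n=n'$ then $G_n=G_{n'}$. For the ``only if'' direction, I would use the fact that any isomorphism of groups $\varphi\colon G_n\xrightarrow{\sim} G_{n'}$ carries the center onto the center, so it restricts to an isomorphism $\mathcal{Z}(G_n)\cong \mathcal{Z}(G_{n'})$. By Proposition \ref{prop:centreofgn} the left-hand side is cyclic of order $n$ and the right-hand side is cyclic of order $n'$, so comparing orders forces $n=n'$.

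There is essentially no obstacle here: the only thing one needs is that the center is a characteristic subgroup (true for any group, and the argument makes no use of the algebraic structure beyond what is already packaged in Proposition \ref{prop:centreofgn}). An alternative, if one preferred to avoid invoking the full center computation, would be to note that $[G_n,G_n]=\GG_a$ with $G_n$ acting on it via the $n$-th power map, and that the conjugation action of $G_n/[G_n,G_n]\cong\GG_m$ on $[G_n,G_n]\cong\GG_a$ has image the subgroup of $n$-th powers in $\Aut(\GG_a)=\GG_m$; this weight $n$ is an isomorphism invariant and recovers $n$. But the center argument is shorter, so I would present that one.

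\begin{proof}
    If $n=n'$, then $G_n=G_{n'}$, so there is nothing to prove. Conversely, suppose $\varphi\colon G_n\to G_{n'}$ is an isomorphism of groups. Any isomorphism maps the center onto the center, so $\varphi$ restricts to an isomorphism $\mathcal{Z}(G_n)\xrightarrow{\sim}\mathcal{Z}(G_{n'})$. By Proposition \ref{prop:centreofgn}, $\mathcal{Z}(G_n)$ is a cyclic group of order $n$ and $\mathcal{Z}(G_{n'})$ is a cyclic group of order $n'$. Hence $n=n'$.
\end{proof}
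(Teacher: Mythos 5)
Your proof is correct and follows exactly the route the paper intends: the corollary is stated immediately after Proposition \ref{prop:centreofgn} with no separate argument, precisely because the center of $G_n$ being cyclic of order $n$ (and the center being preserved by any isomorphism) gives $n=n'$ at once. Nothing is missing.
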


\section{Noncommutative monoid structures on normal affine surfaces}

The aim of this work is to classify and to study noncommutative monoid structures on normal affine
algebraic surfaces. Combining Propositions \ref{prop:groupofmonoid} and
\ref{prop:clasof2groups}, we deduce that every such surface admits an embedding of the group $G_n$
as an open subset such that the actions of $G_n$ on itself both by right and left multiplication
extends to the action on the whole surface. In the next paragraph we classify all such embeddings.

\subsection{Classification in terms of convex cones}
First of all, let us learn more about left and right actions of $G_n$ on itself.  Define a map
$\rho_n\colon G_n\times G_n \to \Aut(G_n)$ as 
\[
\rho_n(g_1,g_2)(g)= g_1gg_2^{-1}
\]
for $g_1,g_2,g\in G_n$.
The kernel of this map is the diagonal embedding of the center $\Delta(\mathcal{Z}(G_n))\subset
G_n\times G_n$. Thus, the image is a four-dimensional linear algebraic group with a maximal torus
of dimension $2$. 
\begin{proposition} \label{prop:torongn}
   There is an embedding of the two-dimensional torus $\nu\colon T=\GG_m^2\hookrightarrow
   \rho_n(G_n\times G_n)$ such that
   \[
       \nu((t_1, t_2))\cdot (\alpha, \tau) = (t_1^{-1}\alpha, t_2^{-1}\tau)
   \]
   for $(t_1,t_2)\in T$ and $(\alpha, \tau)\in G_n$. This action endows $G_n$ with the structure of
   a toric surface.
\end{proposition}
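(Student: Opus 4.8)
The plan is to write down, for each $(t_1,t_2)\in T$, explicit elements $g_1,g_2\in G_n$ such that the automorphism $g\mapsto g_1gg_2^{-1}$ acts by the prescribed scaling, and then to check that the resulting element of $\rho_n(G_n\times G_n)$ does not depend on the (inevitable) choice made. First I would look for $g_1=(a_1,s_1)$, $g_2=(a_2,s_2)$ in $G_n$ with $\rho_n(g_1,g_2)(\alpha,\tau)=(t_1^{-1}\alpha,t_2^{-1}\tau)$. Using the inverse formula $(a,s)^{-1}=(-s^{-n}a,s^{-1})$, one computes
\[
    (a_1,s_1)\cdot(\alpha,\tau)\cdot(a_2,s_2)^{-1}=\bigl(a_1+s_1^n\alpha-s_1^ns_2^{-n}\tau^n a_2,\ s_1s_2^{-1}\tau\bigr).
\]
For the first coordinate to equal $t_1^{-1}\alpha$ identically, the $\tau$-dependent and the constant terms must vanish, which forces $a_1=a_2=0$, and the coefficient of $\alpha$ must be $t_1^{-1}$, which forces $s_1^n=t_1^{-1}$; the second coordinate then forces $s_2=s_1t_2$. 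So, choosing any $n$-th root $s_1$ of $t_1^{-1}$, the pair $g_1=(0,s_1)$, $g_2=(0,s_1t_2)$ does the job.

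Next I would check that the class $\rho_n(g_1,g_2)$ is independent of the choice of $s_1$: replacing $s_1$ by $\zeta s_1$ with $\zeta^n=1$ multiplies $(g_1,g_2)$ by $\Delta\bigl((0,\zeta)\bigr)$, and $(0,\zeta)\in\mathcal{Z}(G_n)$ by Proposition \ref{prop:centreofgn}, so this factor lies in $\ker\rho_n=\Delta(\mathcal{Z}(G_n))$. Hence $\nu\bigl((t_1,t_2)\bigr):=\rho_n\bigl((0,s_1),(0,s_1t_2)\bigr)$ is well defined. From the explicit formula $\nu\bigl((t_1,t_2)\bigr)(\alpha,\tau)=(t_1^{-1}\alpha,t_2^{-1}\tau)$ it is immediate that $\nu$ is a group homomorphism and is injective; since the characteristic is zero, an injective homomorphism of algebraic groups is a closed immersion, so $\nu$ is an embedding and its image is a two-dimensional subtorus of $\rho_n(G_n\times G_n)$.

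For the last assertion, note that as a variety $G_n\cong\GG_a\times\GG_m$ is smooth, hence normal, and two-dimensional, and that under $\nu$ the torus $T$ has the open orbit $\{(\alpha,\tau)\colon\alpha\neq 0\}\cong\GG_m^2$, on which it acts freely and transitively. Taking $(1,1)\in G_n$ as base point gives an open embedding $\iota\colon T\hookrightarrow G_n$, $(t_1,t_2)\mapsto(t_1^{-1},t_2^{-1})$, and the identity $s\cdot\iota(t)=\nu(s)\nu(t)(1,1)=\iota(st)$ shows that the translation action of $T$ on itself extends to the algebraic action $\nu$ on $G_n$. Therefore $G_n$ is an affine toric surface.

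I expect the only delicate step to be the well-definedness check — recognizing that the ambiguity in the $n$-th root of $t_1^{-1}$ is exactly absorbed by $\ker\rho_n=\Delta(\mathcal{Z}(G_n))$; the remaining verifications are direct computations in the semidirect product $G_n$.
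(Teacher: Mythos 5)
Your proposal is correct and follows essentially the same route as the paper: it realizes $\nu((t_1,t_2))$ as $\rho_n\bigl((0,s_1),(0,s_1t_2)\bigr)$ with $s_1^n=t_1^{-1}$, checks that the ambiguity in the $n$-th root is absorbed by $\ker\rho_n=\Delta(\mathcal{Z}(G_n))$, and concludes via effectiveness of the action. Your version is slightly more detailed (you derive that $a_1=a_2=0$ is forced and verify the open free orbit through $(1,1)$ explicitly), but the substance is identical to the paper's proof.
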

\begin{proof}
    The action of an element $\rho_n((0, \tau_1)\times(0, \tau_2))$ on $(\alpha,\tau)$ is given by 
    \[
        (0, \tau_1)(\alpha,\tau)(0, \tau_2^{-1})=(\tau_1^n\alpha, \tau_1\tau_2^{-1}\tau).
    \]
    Thus, we can take $\nu((t_1,t_2)) = \rho((0, {t_1}^{-\frac{1}{n}})\times (0,
    {t_1}^{-\frac{1}{n}}t_2))$. The kernel of $\rho$ consists of elements of the form
    $(0,\xi)\times(0,\xi)$ with $\xi^n=1$, so the choice of $n$-th root does not affect the
    definition. The action is effective, so it turns $G_n$ into a normal toric surface.
\end{proof}
Speaking of $G_n$ as a toric surface we always assume the toric structure from Proposition~\ref{prop:torongn} above. The grading on $\KK[G_n]=\KK[x, y, y^{-1}]$ is given by $\deg x = (1,
0),~\deg y = (0, 1)$.  Cones for toric surfaces $G_n$ are given by
\begin{equation}\label{eq:gncones}
\begin{split}
    \sigma_{G_n}=&\{(a, 0)\in N_\QQ\colon a\geq 0\}, \\
    \sigma_{G_n}^\vee=&\{(a, b)\in N_\QQ \colon a\geq 0,~b\in \QQ \}.
\end{split}
\end{equation}

Now we are ready for one of the main results.
\begin{theorem} \label{t:monistor}
    Every two-dimensional (not necessarily normal) affine algebraic monoid admits a toric structure.
\end{theorem}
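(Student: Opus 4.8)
The plan is to split the argument according to the structure of the group of invertible elements $G(X)$, using Proposition~\ref{prop:groupofmonoid}. Since $X$ is two-dimensional, $G(X)$ is a two-dimensional connected affine algebraic group, hence either commutative or noncommutative. If $G(X)$ is noncommutative, then by Proposition~\ref{prop:clasof2groups} it is isomorphic to $G_n$ for some $n\geq 1$, and Proposition~\ref{prop:torongn} gives an effective action of a two-dimensional torus $T$ on $G_n$; the key point is that the action of $T$ is realized by elements of $\rho_n(G_n\times G_n)$, i.e., by compositions of left and right translations by $G_n$-elements, so it extends from $G_n$ to all of $X$ because $\mu\colon X\times X\to X$ restricted to $G_n\times X$ and $X\times G_n$ is defined on all of $X$ and agrees with the monoid multiplication. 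This exhibits $X$ as a (not necessarily normal) affine toric surface: $T$ acts effectively with a dense orbit $G_n\subset X$. If $G(X)$ is commutative, one must still show $X$ is toric; here I would either invoke \cite{DZ21} together with \cite{AK15} as the excerpt hints, or argue directly that a two-dimensional affine monoid whose unit group is commutative is toric by the same translation-extension mechanism once we know $G(X)$ contains a two-dimensional torus acting appropriately — noting that $G(X)$ is either $T$ itself, or $\GG_a\times\GG_m$, or $\GG_a^2$ (the last forcing $X=G(X)=\GG_a^2$ by the closed-orbit argument already cited in the introduction).

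More precisely, in the noncommutative case I would proceed as follows. First, take the open embedding $G_n\hookrightarrow X$ with $\mu$ extending both left and right multiplications of $G_n$ on $X$. Second, for $(t_1,t_2)\in T$ pick representatives $g_1,g_2\in G_n$ with $\rho_n(g_1,g_2)=\nu((t_1,t_2))$ as in Proposition~\ref{prop:torongn}, and define $\lambda((t_1,t_2))\colon X\to X$ by $x\mapsto \mu(g_1,\mu(x,g_2^{-1}))$; this is a morphism $X\to X$ (composition of the morphism $\mu$ with translations by fixed invertible elements), it is invertible with inverse given by $(t_1^{-1},t_2^{-1})$, and on the dense open set $G_n$ it coincides with the toric action $\nu$. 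Third, because $G_n$ is dense in $X$ and $X$ is separated, the assignment $(t_1,t_2)\mapsto\lambda((t_1,t_2))$ is a group homomorphism $T\to\Aut(X)$ and the map $T\times X\to X$ is a morphism (one checks this on the dense open $T\times G_n$ where it is the restriction of $\mu$ composed with translations, then extends by separatedness). Finally, the action is effective since it is already effective on $G_n$, and $G_n$ itself contains the dense torus orbit from Proposition~\ref{prop:torongn}; composing the two embeddings $T\hookrightarrow G_n\hookrightarrow X$ gives the required open torus embedding, so $X$ is an affine toric surface.

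The main obstacle I expect is the commutative case: unlike the noncommutative case, where Proposition~\ref{prop:torongn} hands us a two-dimensional torus built from inner-type automorphisms that manifestly extend over $X$, when $G(X)$ is commutative we have $G(X)=X$ only when $G(X)\cong\GG_a^2$, and when $G(X)\cong\GG_a\times\GG_m$ or $G(X)\cong T$ one needs the extension of the relevant torus action to $X$. For $G(X)=T$ this is immediate since $X$ is then normal toric by definition once we confirm the $T$-action extends (it does, being left translation by the torus, which is part of $\mu$). For $G(X)\cong\GG_a\times\GG_m$ the honest route is to cite \cite{DZ21}, where exactly these monoids are shown to be toric; since the present theorem only asserts the existence of a toric structure and does not require normality, quoting \cite{DZ21} (or \cite{AK15}) for that sub-case is legitimate. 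A secondary technical point is verifying that $\lambda\colon T\times X\to X$ is a morphism rather than merely a family of morphisms; this follows from the universal property of the open dense embedding together with separatedness of $X$, or more concretely from the fact that $\lambda$ is the composition $T\times X\xrightarrow{\sim}\{g_1\}\times X\times\{g_2^{-1}\}\hookrightarrow X\times X\times X\to X$ where the last map applies $\mu$ twice with the torus parameter absorbed into the choice of $g_1,g_2$ — making this rigorous requires choosing the $g_i$ as actual morphisms $T\to G_n$, which is possible because $t\mapsto(0,t_1^{-1/n})$ etc. are morphisms after the same $n$-th root bookkeeping used in the proof of Proposition~\ref{prop:torongn}.
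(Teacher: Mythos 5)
Your noncommutative case is essentially the paper's argument: the torus of Proposition~\ref{prop:torongn} sits inside $\rho_n(G_n\times G_n)$, i.e.\ is built from left and right translations, and these extend from $G_n$ to $X$ via the monoid multiplication; the extra care you take about choosing $g_1,g_2$ algebraically in $t$ (the $n$-th root bookkeeping) is fine and at least as detailed as the paper. The genuine gap is in the commutative case with $G(X)\cong\GG_a\times\GG_m$. The theorem is about monoids $X$ that are \emph{not assumed normal}, whereas \cite{DZ21} classifies commutative monoid structures on \emph{normal} affine surfaces; your justification ``the present theorem only asserts the existence of a toric structure and does not require normality, so quoting \cite{DZ21} is legitimate'' confuses weakening the conclusion with weakening the hypothesis. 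For a non-normal $X$ with unit group $\GG_a\times\GG_m$, \cite{DZ21} simply does not apply, and this sub-case is exactly what the paper's proof handles by a new argument rather than a citation. Your proposed fallback, the same translation-extension mechanism, also cannot work here: since $G(X)$ is commutative, left and right translations coincide, so the automorphisms of $X$ obtained from the monoid multiplication contain only the one-dimensional maximal torus of $G(X)$, and no two-dimensional torus is produced.

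What is missing is the direct argument the paper gives for this sub-case. Identify $\KK[G(X)]$ with $\KK[x,y,y^{-1}]$; the $\GG_m$-factor grades $\KK[G(X)]$ by the $y$-degree and $\KK[X]$ is a graded $G(X)$-stable subalgebra. A homogeneous element has the form $f=y^b g(x)$, and the $\GG_a$-action sends it to $y^b g(x-\alpha)$; since the translates $g(x-\alpha)$, $\alpha\in\KK$, span all polynomials of degree at most $\deg g$, every monomial $x^a y^b$ with $a\le\deg g$ already lies in $\KK[X]$. Hence $\KK[X]$ is spanned by the characters it contains, the induced $\ZZ^2$-grading has one-dimensional homogeneous components, and this exhibits a (possibly non-normal) toric structure on $X$. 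Without this step (or some substitute covering non-normal $X$, e.g.\ a result in the spirit of \cite{AK15} that you would have to verify applies without normality), your proof does not establish the theorem as stated, only its restriction to normal surfaces in the commutative case. The remaining commutative sub-cases you treat correctly: $G(X)=T$ acts on $X$ by left translation, and $G(X)\cong\GG_a^2$ forces $X=G(X)$ by the closed-orbit argument.
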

\begin{proof}
    The case of normal commutative monoids was proved in \cite[Theorem 4]{DZ21}. However, the
    statement is also true without this assumption. Suppose that $X$ is a commutative
    two-dimensional affine algebraic monoid. If $G(X)$ is a two-dimensional torus, then $X$ is
    toric by definition. If the maximal torus of $G(X)$ is trivial then $G(X)$ is unipotent. By
    \cite[~Theorem 7.2.1]{SR17} the orbits of $G(X)$ acting on $X$ are closed. Since $G(X)$ is open
    in $X$, we have $X=G(X)\cong \GG_a^2$, which is toric.

    Assume now that $G(X)$ has the maximal torus of dimension one, so $G(X)\cong \GG_a\times\GG_m$.
    The algebra of functions on $G(X)$ is isomorphic to $\KK[x,y,y^{-1}]$ with the induced action
    of $G(X)$ given by 
    \[
        (\alpha, \tau)\cdot x^ay^b=\tau^{-b}(x-\alpha)^ay^b
    \]
    for $(\alpha,\tau)\in \GG_a\times \GG_m$ and integers $a\geq0,~b$.

    The algebra of functions $\KK[X]$ on $X$ is a subalgebra of $\KK[G(X)]$. The action of $G(X)$
    on itself extends to the whole $X$, so $\KK[X]$ is invariant under the action of $G(X)$. The
    action of $\GG_m \subset G(X)$ induces a grading on $\KK[G(X)]$ with $\deg(x^ay^b)=-b$ and
    $\KK[X]$ is a graded subalgebra.

    Take an arbitrary homogeneous element $f\in \KK[X]$. It has the form $f=y^bg(x)$, where $g(x)$
    is a polynomial in $x$. The action of $\GG_a\subset G(X)$ on $f$ is given by $(\alpha, 0)\cdot
    f=y^bg(x-\alpha)$. The polynomials of the form $g(x-\alpha)$ for $\alpha\in \KK$ span the
    vector subspace of polynomials of degree not greater than the degree of $g$. This means that
    $x^ay^b\in \KK[X]$ for $a\leq \deg g$. In other words, all the monomials in $f$ are in
    $\KK[X]$. It follows that $\KK[X]$ is the linear span of monomials $x^ay^b\in \KK[X]$ and the
    grading $\deg(x^ay^b)=(-a,-b)$ defines a toric structure on~$X$. This proves the commutative
    case.

    Now we may assume that $X$ is a noncommutative two-dimensional affine algebraic monoid. The
    group of invertible elements $G(X)$ is a two-dimensional noncommutative connected linear
    algebraic group, so it is isomorphic to $G_n$ for some $n$ by Proposition
    \ref{prop:clasof2groups}. 

    Since $\rho(G_n\times G_n)$ is given by right and left multiplications, every automorphism from
    $\rho(G_n\times G_n)$ extends uniquely to an automorphism of $X$. The toric structure on $G_n$
    from Proposition \ref{prop:torongn} is given by an appropriate embedding $T\hookrightarrow
    \rho(G_n\times G_n)$, so the action of $T$ also extends to $X$.
\end{proof}
\begin{corollary} \label{cor:monistor}
    Let $X$ be an algebraic monoid with the group of invertible elements isomorphic to $G_n$. There
    exists a toric structure on $X$ such that the inclusion $G_n\hookrightarrow X$ is toric.
\end{corollary}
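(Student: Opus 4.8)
The plan is to read the corollary off the toric structure that was already built on $X$ in the noncommutative case of the proof of Theorem~\ref{t:monistor}, adding only the observation that its open torus can be taken inside $G_n$. Recall from that proof that, since $G(X)\cong G_n$, every automorphism in $\rho(G_n\times G_n)$ extends uniquely to an automorphism of $X$, and $X$ was made toric by extending the action of the subtorus $\nu(T)\subset\rho(G_n\times G_n)$ of Proposition~\ref{prop:torongn}. I take this extended $T$-action as the toric structure on $X$; it remains to produce an open torus embedding $\iota_X\colon T\hookrightarrow X$ realizing it and to check that $G_n\hookrightarrow X$ is then a toric morphism.

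First I would note that the extended $T$-action restricts on the open subset $G_n\subset X$ to the action $\nu$ of Proposition~\ref{prop:torongn}, simply because the automorphisms of $X$ coming from $\rho(G_n\times G_n)$ restrict on $G_n$ to the very left and right translations that define them. Hence $G_n$ is $T$-stable in $X$, and the open dense $T$-orbit of $G_n$ --- that is, the image of the torus embedding $\iota_{G_n}\colon T\hookrightarrow G_n$ attached to the structure of Proposition~\ref{prop:torongn} --- is open in $X$ (being open in the open subset $G_n$), dense in $X$ (since $G_n$ is dense in the irreducible variety $X$), and a single orbit of the extended $T$-action on which $T$ acts by its regular self-action. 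Therefore $\iota_X := (G_n\hookrightarrow X)\circ\iota_{G_n}\colon T\hookrightarrow X$ is an open torus embedding turning $X$ into a toric surface for the extended $T$-action.

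With this choice of $\iota_X$, the inclusion $\varphi\colon G_n\hookrightarrow X$ together with $\varphi_T=\mathrm{id}_T$ makes the square
\[
\begin{tikzcd}
G_n \arrow[r, "\varphi", hook] & X \\
T \arrow[r, "\mathrm{id}_T"] \arrow[u, "\iota_{G_n}", hook] & T \arrow[u, "\iota_X", hook]
\end{tikzcd}
\]
commute by the definition of $\iota_X$, so $G_n\hookrightarrow X$ is toric. I do not anticipate a real obstacle: the single point requiring care is that the open orbit of $G_n$ is still the open orbit of $X$, and this is immediate once the extended action is known to restrict to $\nu$ on $G_n$.
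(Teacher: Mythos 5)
Your proposal is correct and follows essentially the same route as the paper: the paper's proof of this corollary is simply to invoke the noncommutative case of the proof of Theorem~\ref{t:monistor}, where the $T$-action from Proposition~\ref{prop:torongn} is extended to $X$ via the extension of left and right translations. You merely make explicit the details the paper leaves implicit (that the extended action restricts to $\nu$ on $G_n$, that the open torus orbit of $G_n$ stays open and dense in $X$, and that the inclusion is then toric with $\varphi_T=\mathrm{id}_T$), which is a faithful elaboration rather than a different argument.
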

\begin{proof}
    It follows immediately from the proof of Theorem \ref{t:monistor}.
\end{proof}

From now on we fix $n$ and assume that $X$ is a normal affine monoid with the group of
invertibles $G_n$. Thus, $X$ is given by the strongly convex cone $\sigma_X \supset \sigma_{G_n}$
in $N$ or equivalently by $\sigma_X^\vee \subset \sigma_{G_n}^\vee$ in $M$. We are more
interested in the cone $\sigma_X^\vee\subset M$ and we obtain a classification of monoids from
the classification of such cones with additional conditions. 

So far, we used only left and right $\GG_m$ actions on $X$.  Action of $\GG_a\subset G_n$ on $G_n$
by left and right multiplications induces two locally nilpotent derivations on
$\KK[G_n]=\KK[x,y,y^{-1}]$. Let us write down the
action of the element $(\alpha, 0)\in G_n$ on the element $(\alpha_1, \tau_1)$:
\[
    (\alpha, 1)\cdot (\alpha_1, \tau_1) = (\alpha + \alpha_1, \tau_1),
\]
\[
    (\alpha_1, \tau_1)\cdot (\alpha_, 1) = (\alpha_1 + \tau^n \alpha, \tau_1).
\]
Hence, the induced derivations are proportional to $\partial_x$ and $y^n\partial_x$ respectively.
\begin{notation}
    We denote by $\delta_l$ and $\delta_r$ locally nilpotent derivations on $\KK[G_n]$ given
    by $\delta_l = \partial_x$ and $\delta_r=y^n\partial_x$. These locally nilpotent
    derivations are induced by left and right $\GG_a$ actions on $G_n$.
\end{notation}
\begin{remark} \label{rem:lndconstant}
    Note that these derivations are defined up to scalar. This scalar depends on the choice of
    parametrization of $\GG_a$ in $G_n$. 
\end{remark}
The actions of $\GG_a$ extends to the whole $X$, so $\KK[X]$ should be invariant for $\delta_l$ and
$\delta_r$. We use this fact for the next lemma. Recall that the group algebra of $G_n$ is
identified with the semigroup algebra $\KK[\sigma^\vee_{G_n} \cap M]$, where $\sigma^\vee_{G_n}$ is
defined in \eqref{eq:gncones}. Every subcone $\sigma^\vee\subset \sigma^\vee_{G_n}$ defines a graded
subalgebra $\KK[\sigma^\vee \cap M]\subset \KK[G_n]$.
\begin{lemma} \label{l:conetriangle}
    Let $\sigma^\vee\subset \sigma^\vee_{G_n}$ be a convex cone. Then $\KK[\sigma^\vee \cap M]$
    is a subalgebra in $\KK[G_n]$ and the following are equivalent.
    \begin{enumerate}
        \item The comultiplication on $\KK[G_n]$ restricts to a comultiplication on $\KK[\sigma^\vee
            \cap M]$. 
        \item For every lattice point $(a,b)$ in $\sigma^\vee$ the points $(0,b)$ and $(0,b+na)$
            are also in $\sigma^\vee$.
    \end{enumerate}
\end{lemma}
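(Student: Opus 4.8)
The plan is to make the comultiplication fully explicit on monomials and then reduce the equivalence to an elementary convexity computation. First I would dispatch the preliminary assertion: since $\sigma^\vee$ is a convex cone, $\sigma^\vee\cap M$ is a submonoid of $M$, so $\KK[\sigma^\vee\cap M]=\bigoplus_{u\in\sigma^\vee\cap M}\KK\chi^u$ is closed under multiplication and is a subalgebra of $\KK[G_n]=\KK[x,y,y^{-1}]$.

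Next, using \eqref{eq:comultipongn} and the binomial theorem in the commutative ring $\KK[G_n]\otimes\KK[G_n]$, I would compute, for a lattice point $(a,b)\in\sigma^\vee$,
\[
    \mu^*(x^ay^b)=\sum_{k=0}^{a}\binom{a}{k}\,x^{k}y^{n(a-k)+b}\otimes x^{a-k}y^{b}.
\]
The monomials $x^iy^j$ form a basis of $\KK[G_n]$, and those with $(i,j)\in\sigma^\vee$ form a basis of the subalgebra $\KK[\sigma^\vee\cap M]$; hence the tensors $x^iy^j\otimes x^{i'}y^{j'}$ with both exponent vectors in $\sigma^\vee$ form a basis of $\KK[\sigma^\vee\cap M]\otimes\KK[\sigma^\vee\cap M]$ inside $\KK[G_n]\otimes\KK[G_n]$. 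Since $\KK$ has characteristic zero, every coefficient $\binom{a}{k}$ is nonzero, so $\mu^*(x^ay^b)$ lies in $\KK[\sigma^\vee\cap M]\otimes\KK[\sigma^\vee\cap M]$ if and only if each summand does, i.e.\ if and only if $(a-k,b)\in\sigma^\vee$ and $(k,n(a-k)+b)\in\sigma^\vee$ for every $0\le k\le a$. As $\KK[\sigma^\vee\cap M]$ is spanned by the monomials $x^ay^b$ with $(a,b)\in\sigma^\vee$ and $\mu^*$ is linear, statement (1) is therefore equivalent to requiring this family of membership conditions for all $(a,b)\in\sigma^\vee\cap M$ (the coassociativity and counit axioms being inherited automatically from $\KK[G_n]$).

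It then remains to match this family of conditions with (2). The implication (1)$\Rightarrow$(2) is immediate: specialize $k=a$ to obtain $(0,b)\in\sigma^\vee$, and $k=0$ to obtain $(0,na+b)\in\sigma^\vee$. For (2)$\Rightarrow$(1), take $(a,b)\in\sigma^\vee\cap M$; by (2) also $(0,b),(0,na+b)\in\sigma^\vee$. If $a=0$ then $k=0$ and there is nothing to prove, so assume $a\ge 1$; for $0\le k\le a$ the identities
\[
    (a-k,b)=\tfrac{a-k}{a}(a,b)+\tfrac{k}{a}(0,b),\qquad
    (k,n(a-k)+b)=\tfrac{k}{a}(a,b)+\tfrac{a-k}{a}(0,na+b)
\]
exhibit both points as convex combinations of elements of $\sigma^\vee$. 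Since $\sigma^\vee$ is convex, both lie in $\sigma^\vee$, and being integral vectors they lie in $\sigma^\vee\cap M$; this is exactly the family of conditions equivalent to (1).

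I do not expect a genuine obstacle here. The two points that need a little care are the basis argument that permits checking membership in $\KK[\sigma^\vee\cap M]\otimes\KK[\sigma^\vee\cap M]$ term by term, and the observation that the two ``vertical'' points $(0,b)$ and $(0,na+b)$, together with $(a,b)$ itself, already dominate every intermediate exponent $(k,n(a-k)+b)$ occurring in $\mu^*(x^ay^b)$ — this is what lets the two extreme points named in (2) suffice. Everything else is a direct computation.
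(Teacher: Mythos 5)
Your proof is correct, and its second half coincides with the paper's: both expand \eqref{eq:comultipongn} into $\sum_i \binom{a}{i}\,x^{a-i}y^{b+ni}\otimes x^iy^b$ (your indexing with $k=a-i$) and then use convexity of $\sigma^\vee$ together with the three points $(a,b)$, $(0,b)$, $(0,b+na)$ to place every intermediate exponent in the cone — your explicit convex combinations are exactly what the paper leaves implicit. Where you genuinely diverge is the direction (1)$\Rightarrow$(2): the paper invokes the locally nilpotent derivations $\delta_l=\partial_x$ and $\delta_r=y^n\partial_x$ coming from the left and right $\GG_a$-actions, noting they restrict to $\KK[\sigma^\vee\cap M]$ and computing $\delta_l^{a}(x^ay^b)=a!\,y^b$ and $\delta_r^{a}(x^ay^b)=a!\,y^{b+na}$, whereas you read off the extreme summands $k=a$ and $k=0$ of the same binomial expansion, using that the monomial tensors form a basis and that binomial coefficients are nonzero in characteristic zero so membership can be checked term by term. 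Your route is more self-contained (it needs nothing beyond the comultiplication formula and does not require justifying that the derivations restrict), and it handles both implications with a single mechanism; the paper's route ties the lemma to the $\GG_a$-actions and the derivations $\delta_l,\delta_r$, which are reused later (e.g.\ for the invariants $\mathfrak{L}_k$), so the two arguments trade economy of means against continuity with the rest of the paper. Your remarks that the subalgebra claim follows from $\sigma^\vee\cap M$ being a submonoid and that coassociativity and the counit are inherited by restriction are also correct and harmless additions.
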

\begin{proof}
    The point $(a,b)$ is in the cone $\sigma^\vee$ if and only if $x^ay^b\in \KK[\sigma^\vee \cap
    M]$. 

    First, suppose that (1) holds. Then the derivations $\delta_l$ and $\delta_r$ also restrict
    to $\KK[\sigma^\vee \cap M]$ and we can write
    \[
        \delta_l^{a} x^ay^b = a!y^b,
    \]
    \[
        \delta_r^{a} x^ay^b = a!y^{b+na}.
    \]
    That means that $(0,b)$ and $(0,b+na)$ are in $\sigma^\vee$.

    Now suppose that (2) holds. Let us expand the brackets in the formula for comultiplication
    \eqref{eq:comultipongn}:
    \[
        x^ay^b\mapsto (x\otimes 1 + y^n\otimes x)^a (y\otimes y)^b = \sum_{i=0}^{a}
        \binom{a}{i} x^{a-i}y^{b+ni}\otimes x^iy^b.
    \]
    Since $\sigma^\vee$ is convex and it contains points $(a,b)$, $(0,b)$, and $(0, b+na)$, it also
    contains the points $(a-i, b+ni)$ and $(i,b)$ for $0\leq i \leq a$. It follows that the
    comultiplication restricts to $\KK[\sigma^\vee\cap M]$.
\end{proof}
 We are going to introduce a notation for subcones of $\sigma^\vee_{G_n}$, which satisfy the second
 condition of Lemma $\ref{l:conetriangle}$. We will eventually find out that the corresponding toric
 surfaces are pairwise non-isomorphic noncommutative monoids.
\begin{construction} \label{def:xynab}
    Fix a positive integer $n$. Suppose that $a>0$ and $b\geq 0$ are integers and $\gcd(a,b)=1$. We
    define series of subcones of $\sigma^\vee_{G_n}$.
    \begin{enumerate}
        \item We denote by $\sigma^{\vee}(n, a, b)$ the subcone of $\sigma^\vee_{G_n}$, generated
            by rays $(0, 1)$ and $(a, b)$. Then, $\KK[\sigma^{\vee}(n, a, b) \cap M]$ is a subalgebra
            of $\KK[G_n]$ and we denote the corresponding surface as $X_n^{a,b} = \Spec
            \KK[\sigma^{\vee}(n, a, b) \cap M]$.
        \item We denote by $\bar\sigma^{\vee}(n, a, b)$ the subcone of $\sigma^\vee_{G_n}$,
            generated by rays $(0, -1)$ and ${(a, -na - b)}$. Then, $\KK[\bar\sigma^{\vee}(n, a, b)
            \cap M]$ is a subalgebra of $\KK[G_n]$ and we denote the corresponding surface as
            $Y_n^{a,b} = \Spec \KK[\bar\sigma^{\vee}(n, a, b) \cap M]$.
    \end{enumerate}
\end{construction}
Note that, by definition, $X_n^{a, b}$ and $Y_n^{a,b}$ come with a natural open embedding of $G_n$.
\begin{proposition}
    The multiplication on $G_n$ extends to a multiplication on $X_n^{a,b}$ or $Y_n^{a, b}$. Hence
    $X_n^{a, b}$ and $Y_n^{a, b}$ are monoids.
\end{proposition}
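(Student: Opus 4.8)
The plan is to verify that each of the cones $\sigma^\vee(n,a,b)$ and $\bar\sigma^\vee(n,a,b)$ satisfies condition (2) of Lemma~\ref{l:conetriangle}; by that lemma, the comultiplication on $\KK[G_n]$ then restricts to $\KK[\sigma^\vee\cap M]$, which is precisely the statement that the multiplication $\mu\colon G_n\times G_n\to G_n$ extends to $X_n^{a,b}$ (respectively $Y_n^{a,b}$). Associativity and the unit axiom are inherited for free: the comultiplication on $\KK[X_n^{a,b}]$ is the restriction of the one on $\KK[G_n]$, which is coassociative with counit, and restriction preserves these identities. So the whole proposition reduces to a short lattice-geometry check.

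For the $X_n^{a,b}$ case, first I would describe $\sigma^\vee(n,a,b)\cap M$ explicitly: it is the set of $(u,v)\in\ZZ^2$ lying in the convex cone spanned by $(0,1)$ and $(a,b)$, i.e. $u\ge 0$ and $bu - av \le 0$ (equivalently $v\ge (b/a)u$), together with $u\ge 0$. Given such a lattice point $(u,v)$ with $u>0$, I must check that $(0,v)$ and $(0,v+nu)$ also lie in the cone. The point $(0,v)$ requires $v\ge 0$, which follows since $v\ge (b/a)u\ge 0$ as $b\ge 0$ and $u>0$. The point $(0,v+nu)$ then requires $v+nu\ge 0$, which is immediate since $v\ge 0$ and $nu\ge 0$. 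When $u=0$ both conditions are trivial. Thus condition (2) holds, and the same verification shows $\KK[\sigma^\vee(n,a,b)\cap M]$ is a subalgebra (it is automatically closed under multiplication because $\sigma^\vee(n,a,b)$ is a cone, hence closed under addition of lattice points).

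For the $Y_n^{a,b}$ case the check is the genuinely interesting one — this is where I expect the only real subtlety. Here $\bar\sigma^\vee(n,a,b)$ is spanned by $(0,-1)$ and $(a,-na-b)$. A lattice point $(u,v)$ in this cone with $u>0$ satisfies $u\ge 0$ and $v\le -\tfrac{na+b}{a}u$ (the slope of the ray $(a,-na-b)$). We must show $(0,v)$ and $(0,v+nu)$ lie in the cone, i.e. both $v\le 0$ and $v+nu\le 0$. The first is clear: $v\le -\tfrac{na+b}{a}u\le 0$ since $a>0$, $b\ge 0$, $u>0$. For the second, $v+nu\le -\tfrac{na+b}{a}u + nu = -\tfrac{b}{a}u\le 0$, again using $b\ge 0$, $u>0$. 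Hence condition (2) of Lemma~\ref{l:conetriangle} is satisfied and, as before, $\KK[\bar\sigma^\vee(n,a,b)\cap M]$ is a subalgebra.

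In both cases Lemma~\ref{l:conetriangle} now gives a comultiplication $\mu^*$ on the coordinate ring extending the one on $\KK[G_n]$; dually this is a morphism $\mu\colon X\times X\to X$ extending the group multiplication on $G_n$, and $X_n^{a,b}$, $Y_n^{a,b}$ are affine algebraic monoids with $G_n$ as their group of invertibles (the unit of $G_n$ remains a unit since it is cut out by the restricted counit). The only thing to be careful about in the write-up is getting the defining inequalities of the two cones right — in particular the sign of the second coordinate of the generator $(a,-na-b)$ of $\bar\sigma^\vee(n,a,b)$ — since everything hinges on the slope comparison $v+nu\le -\tfrac{b}{a}u$. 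Once that inequality is in hand the argument is routine.
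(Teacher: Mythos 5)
Your proof is correct and follows essentially the same route as the paper: both arguments reduce the proposition to checking condition (2) of Lemma~\ref{l:conetriangle} for the cones $\sigma^\vee(n,a,b)$ and $\bar\sigma^\vee(n,a,b)$, using the same slope inequalities (in particular $v+nu\le -\tfrac{b}{a}u\le 0$ in the $Y_n^{a,b}$ case). Your additional remarks on coassociativity and the counit being inherited by restriction are a harmless elaboration of what the paper leaves implicit.
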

\begin{proof}
    It is enough to prove that the cones $\sigma^\vee(n,a,b)$ and $\bar\sigma^\vee(n,a,b)$ satisfy
    the second condition of Lemma \ref{l:conetriangle}. 

    First, let us consider the case of $\sigma^\vee(n,a,b)$. Every point $(a',b')$ in
    $\sigma^\vee(n,a,b)$ has $b'>0$. Moreover, every point of the form $(0,b'')$ with $b''\geq0$ lies
    in $\sigma^\vee(n,a,b)$. Hence, if $(a',b')$ in $\sigma^\vee(n,a,b)$, then $(0,b')$ and
    $(0,b'+na')$ are in $\sigma^\vee(n,a,b)$.

    The case of $\bar\sigma^\vee(n,a,b)$ is a bit more complicated. Again all the points of the
    form $(0,b'')$ with $b''\leq 0$ lie in $\bar\sigma^\vee(n,a,b)$. For every point $(a',b')$ in
    $\bar\sigma^\vee(n,a,b)$ we have $b'\leq -na'$, hence both $b'$ and $b'+na'$ are non-positive.
    Thus, the cone $\bar\sigma^\vee(n,a,b)$ also satisfies the second condition of Lemma
    \ref{l:conetriangle}.
\end{proof}
We now want to prove that $X_n^{a,b}$ and $Y_n^{a,b}$ are pairwise non-isomorphic. To do this let
us introduce a sequence of invariants of monoids associated to image ideals of $\delta_l$. 

Let $X$ be an algebraic monoid with the group of invertible elements $G_n$. The locally nilpotent
derivation $\delta_l$ on $\KK[G_n]$ restricts to a locally nilpotent derivation on $\KK[X]$. Then
the subspace $\ker(\delta_l)\cap\delta_l^k(\KK[X])$ is an ideal in
$\ker(\delta_l)\cap\KK[X]$. It is called the $k$-th image ideal of $\delta_l$ (see
\cite[Proposition 1.9]{F17}).
\begin{notation}
    We denote by $\mathfrak{L}_k(X)$ the codimension of $k$-th image ideal of $\delta_l$:
    \[
        \mathfrak{L}_k(X)=\dim \frac{\ker(\delta_l)\cap
        \KK[X]}{\ker(\delta_l)\cap\delta_l^k(\KK[X])}.
    \]
\end{notation}
Note that, by Remark \ref{rem:lndconstant}, the locally nilpotent derivation $\delta_l$ is
defined up to scalar. However, the definition of $\mathfrak{L}_k(X)$ does
not depend on this constant.
\begin{lemma} \label{l:linvariant}
    For monoids $X_n^{a,b}$ and $Y_n^{a,b}$ we have 
    \[
        \mathfrak{L}_k(X_n^{a,b})= \ceil*{\frac{k}{a}b},
    \]
    \[
        \mathfrak{L}_k(Y_n^{a,b})= \ceil*{\frac{k}{a}(b+na)}=\ceil*{\frac{k}{a}b}+nk
    \]
    where $\ceil*{\cdot}$ denotes the ceiling function.
\end{lemma}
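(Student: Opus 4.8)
The goal is to compute the invariant $\mathfrak{L}_k$ for the two families of monoids. I would start by describing $\KK[X]$ and the kernel of $\delta_l$ explicitly in lattice terms. For $X = X_n^{a,b}$, the algebra $\KK[X]$ is spanned by the monomials $x^{a'}y^{b'}$ with $(a',b') \in \sigma^\vee(n,a,b) \cap M$, and $\delta_l = \partial_x$ kills exactly the monomials with $a' = 0$, i.e. $\ker(\delta_l) \cap \KK[X] = \KK[y, y^{-1}] \cap \KK[X]$. Since every $(0,b'')$ with $b'' \geq 0$ lies in the cone (as noted in the preceding proposition), we get $\ker(\delta_l) \cap \KK[X] = \KK[y]$, graded by $b'' \geq 0$ with one basis vector $y^{b''}$ in each degree.

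\textbf{Key steps.} Next I would identify the image ideal $\ker(\delta_l) \cap \delta_l^k(\KK[X])$. Applying $\delta_l^k = \partial_x^k$ to a monomial $x^{a'}y^{b'}$ gives a nonzero multiple of $x^{a'-k}y^{b'}$ when $a' \geq k$, and this lies in the kernel precisely when $a' = k$. So the image ideal, intersected with the kernel, is the span of those $y^{b'}$ for which $(k, b') \in \sigma^\vee(n,a,b) \cap M$. The ray generators of $\sigma^\vee(n,a,b)$ are $(0,1)$ and $(a,b)$, so a point $(k, b')$ lies in the cone iff $b' \geq \frac{k}{a} b$; the smallest such integer is $\ceil*{\frac{k}{a}b}$. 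Therefore $\ker(\delta_l) \cap \delta_l^k(\KK[X])$ is the span of $y^{b''}$ with $b'' \geq \ceil*{\frac{k}{a}b}$, an ideal of codimension exactly $\ceil*{\frac{k}{a}b}$ in $\KK[y] = \ker(\delta_l)\cap\KK[X]$. That gives the first formula.

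\textbf{The second family.} For $Y = Y_n^{a,b}$ the argument is parallel but one has to be careful with the cone $\bar\sigma^\vee(n,a,b)$, whose ray generators are $(0,-1)$ and $(a, -na-b)$. Again $\ker(\delta_l) \cap \KK[Y] = \KK[y^{-1}]$ (monomials $y^{b''}$ with $b'' \leq 0$), but now the natural grading index to use is $-b''\geq 0$. Repeating the computation, $(k, b')$ lies in $\bar\sigma^\vee(n,a,b)$ iff $b' \leq \frac{k}{a}(-na-b)$, i.e. $-b' \geq \frac{k}{a}(na+b)$, so the image ideal is $\{y^{b''} : -b'' \geq \ceil*{\frac{k}{a}(na+b)}\}$, of codimension $\ceil*{\frac{k}{a}(na+b)}$ in $\KK[y^{-1}]$. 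Finally, since $\frac{k}{a}(na+b) = nk + \frac{k}{a}b$ and $nk$ is an integer, $\ceil*{\frac{k}{a}(na+b)} = nk + \ceil*{\frac{k}{a}b}$, which is the second formula.

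\textbf{Main obstacle.} The only genuinely delicate point is making sure the "kernel part" of the image ideal is captured correctly — i.e. that a monomial $x^{a'}y^{b'}$ with $a' > k$ contributes nothing new to $\ker(\delta_l) \cap \delta_l^k(\KK[X])$ beyond what is already obtained from the ideal generated by the degree-$k$ monomials. This is because $\delta_l^k(x^{a'}y^{b'})$ is proportional to $x^{a'-k}y^{b'}$, which is not in $\ker(\delta_l)$ unless $a' = k$; but any element of $\delta_l^k(\KK[X])$ lying in the kernel is a $\KK$-linear combination of such images, and a homogeneous-for-the-$M$-grading argument (the grading is preserved by $\delta_l$ up to the shift by $\deg\delta_l$) shows it must already be a combination of the $y^{b'}$ with $(k,b') \in \sigma^\vee\cap M$. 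Spelling that out cleanly, using that each graded piece of $\KK[X]$ is at most one-dimensional, is the one step that needs a sentence or two of care rather than a one-line dismissal.
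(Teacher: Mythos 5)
Your proposal is correct and follows essentially the same route as the paper: identify $\ker(\delta_l)\cap\KK[X]$ with $\KK[y]$ (resp.\ $\KK[y^{-1}]$), observe that the kernel part of the $k$-th image is spanned by the $y^{b'}$ with $(k,b')$ in the cone, and read off the codimension as the minimal such $b'$, namely $\bigl\lceil \tfrac{k}{a}b\bigr\rceil$ (resp.\ $\bigl\lceil \tfrac{k}{a}(b+na)\bigr\rceil$). The extra care you take in the last paragraph (only monomials with $a'=k$ contribute to the kernel part, justified by the one-dimensionality of the graded pieces) is a detail the paper leaves implicit, but it is the same argument.
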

\begin{proof}
    In case of $X=X_n^{a,b}$ the kernel $\ker(\delta_l)\cap \KK[X]$ is equal to $\KK[y]$. Thus, the
    ideal $\ker(\delta_l)\cap \delta_l^k(\KK[X])$ is the linear span of the elements of the form
    $\delta_l^k(x^ky^{a'})=k!y^{a'}$. Hence, 
    \[
        \mathfrak{L}_k(X)=\min \{a'\geq 0 \colon (k, a')\in \sigma^\vee(n,a,b)\}=\ceil*{\frac{k}{a}b}.
    \]
    The case of $X=Y_n^{a,b}$ is treated in a similar way.
\end{proof}
We can now use invariants $\mathfrak{L}_k$ to distinguish between different non-isomorphic
monoids.
\begin{proposition}
    The monoids from Construction \ref{def:xynab} are pairwise non-isomorphic.
\end{proposition}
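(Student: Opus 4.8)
The plan is to upgrade the invariants $\mathfrak{L}_k$ to a \emph{complete} system of invariants by adjoining their right-handed analogue. First, any isomorphism of monoids restricts to an isomorphism of the groups of invertible elements; since the group of invertible elements of each of the monoids $X_n^{a,b}$, $Y_n^{a,b}$ is $G_n$, and the groups $G_n$, $G_{n'}$ are isomorphic only for $n=n'$ (the corollary following Proposition~\ref{prop:centreofgn}), the integer $n$ is an invariant. So it suffices to compare two monoids from Construction~\ref{def:xynab} built from the same $n$.

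The key step is to see how an isomorphism $\varphi\colon X\to X'$ of two such monoids interacts with $\delta_l$ and $\delta_r$. Because $\varphi$ is compatible with the multiplication maps, it intertwines left multiplication by $g$ with left multiplication by $\varphi(g)$, and similarly for right multiplication. Restricting to the unipotent radical $\GG_a\subset G_n$, which is a characteristic subgroup of $G_n$, and using $\Aut(\GG_a)=\GG_m$, the induced left $\GG_a$-action on $X$ is carried by $\varphi$ to the left $\GG_a$-action on $X'$ up to rescaling the parameter; hence $\varphi$ transports $\delta_l$ to a nonzero scalar multiple of $\delta_l$, and, by the same argument applied to right multiplication, transports $\delta_r$ to a nonzero scalar multiple of $\delta_r$. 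Consequently $\varphi^*$ matches the kernels and the image ideals of $\delta_l$ on the two algebras, and likewise for $\delta_r$. Therefore not only $\mathfrak{L}_k$ but also the analogous sequence $\mathfrak{R}_k(X)=\dim\frac{\ker(\delta_r)\cap\KK[X]}{\ker(\delta_r)\cap\delta_r^k(\KK[X])}$ is an isomorphism invariant, and the \emph{ordered} pair $(\mathfrak{L}_k,\mathfrak{R}_k)$ is preserved.

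I would then compute $\mathfrak{R}_k$ exactly as in the proof of Lemma~\ref{l:linvariant}, now using the identity $\delta_r^k=y^{nk}\partial_x^k$; this gives $\mathfrak{R}_k(X_n^{a,b})=\ceil*{\frac{k}{a}b}+nk$ and $\mathfrak{R}_k(Y_n^{a,b})=\ceil*{\frac{k}{a}b}$. Hence $\mathfrak{R}_k-\mathfrak{L}_k$ equals $nk$ on the whole family $X_n^{a,b}$ and $-nk$ on the whole family $Y_n^{a,b}$; since $n>0$, its sign (already for $k=1$) distinguishes the $X$-family from the $Y$-family. Finally, within the $X$-family the sequence $\mathfrak{L}_k=\ceil*{\frac{k}{a}b}$ recovers the reduced fraction $b/a=\lim_{k\to\infty}\mathfrak{L}_k/k$, hence the pair $(a,b)$; within the $Y$-family the shifted sequence $\mathfrak{L}_k-nk=\ceil*{\frac{k}{a}b}$ does the same. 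Putting this together, the monoids $X_n^{a,b}$ and $Y_n^{a,b}$ are pairwise non-isomorphic.

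The main obstacle is the middle step: one must argue carefully that an abstract monoid isomorphism respects the two one-sided multiplications \emph{separately}, so that $\delta_l$ and $\delta_r$ are each transported to a scalar multiple of themselves (and not merely interchanged). This is also exactly where one sees that $\mathfrak{L}_k$ alone is insufficient, since $\mathfrak{L}_k(X_n^{a,b+na})=\mathfrak{L}_k(Y_n^{a,b})$ for all $k$; the right-handed invariant $\mathfrak{R}_k$ is introduced precisely to break this tie.
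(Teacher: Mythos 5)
Your proof is correct, and it diverges from the paper's in the way it separates the $X$-family from the $Y$-family. The paper's own argument is organized the same way at the start: it fixes $n$ via the group of invertibles, and it distinguishes $X_n^{a,b}$ from $X_n^{a',b'}$ (and $Y_n^{a,b}$ from $Y_n^{a',b'}$) using the invariants $\mathfrak{L}_k$ of Lemma \ref{l:linvariant} — it evaluates at the single index $k=aa'$, whereas you pass to the limit $\mathfrak{L}_k/k\to b/a$; these are equivalent. For the cross-family comparison the paper does not introduce a second invariant: it argues that an isomorphism would have to carry $\delta_l,\delta_r$ to (scalar multiples of) $\delta_l,\delta_r$, and then uses the algebra-internal relation $\delta_r=f\,\delta_l$ with a multiplier $f\in\KK[X_n^{a,b}]$ (namely $f=y^n$), which has no counterpart in $\KK[Y_n^{a,b}]$. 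Your route instead symmetrizes the invariant: you define $\mathfrak{R}_k$ from the image ideals of $\delta_r$, compute $\mathfrak{R}_k(X_n^{a,b})=\ceil*{\tfrac{k}{a}b}+nk$ and $\mathfrak{R}_k(Y_n^{a,b})=\ceil*{\tfrac{k}{a}b}$, and read off the family from the sign of $\mathfrak{R}_k-\mathfrak{L}_k$. Both arguments hinge on the same key transport statement, which the paper merely asserts and you actually justify (a monoid isomorphism respects left and right multiplications separately, the unipotent radical $\GG_a\subset G_n$ is characteristic, and $\Aut(\GG_a)=\GG_m$, so each of $\delta_l,\delta_r$ is carried to a nonzero scalar multiple of itself); that justification, together with your observation that $\mathfrak{L}_k$ alone cannot separate $X_n^{a,b+na}$ from $Y_n^{a,b}$, is a genuine gain in completeness, while the paper's multiplier argument is slightly more economical since it needs no second numerical invariant.
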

\begin{proof}
   Two monoids are not isomorphic if their groups of invertible elements are not isomorphic, so we
   may fix $n$.

   Consider monoids $X_n^{a,b}$ and $X_n^{a',b'}$ for $(a,b)\neq(a',b')$ with integers $a,a'>0$,
   $b,b'\geq 0$, and $\gcd(a,b)=\gcd(a',b')=1$.  By Lemma \ref{l:linvariant} we have
   $\mathfrak{L}_{aa'}(X_n^{a,b})=a'b$ and $\mathfrak{L}_{aa'}(X_n^{a',b'})=ab'$. Since
   $\gcd(a,b)=\gcd(a',b')$, we have $a'b\neq ab'$, so the monoids $X_n^{a,b}$ and $X_n^{a',b'}$ are
   not isomorphic. The same argument works for $Y_n^{a,b}$.

   It remains to show that $X=X_n^{a,b}$ and $Y=Y_n^{a',b'}$ are non-isomorphic. If $X$ and $Y$
   were isomorphic then there would be an isomorphism between the algebras $\KK[X]$ and $\KK[Y]$
   such that the derivations $\delta_l$, $\delta_r$ on $\KK[X]$ map to the corresponding
   derivations on $\KK[Y]$. The element $x^n$ of $\KK[X]$ has the property that
   $\delta_r=x^n\delta_l$ but there is no such element in $\KK[Y]$.  So there is no isomorphism
   between $\KK[X]$ and $\KK[Y]$, which preserves $\delta_l$ and $\delta_r$.
\end{proof}

We are now ready for the main classification result.
\begin{theorem} \label{t:convconeclass}
   Let $X$ be a normal affine two-dimensional noncommutative algebraic monoid. Then $X$ is
   isomorphic to one of the following.
   \begin{enumerate}
       \item The group $G_n$ for the unique $n$. 
       \item The monoid $X_n^{a,b}$ for the unique integers $n>0$, $a>0$, and $b\geq0$ with
           $\gcd(a,b)=1$.
       \item The monoid $Y_n^{a,b}$ for the unique integers $n>0$, $a>0$, and $b\geq0$ with
           $\gcd(a,b)=1$.
   \end{enumerate}
\end{theorem}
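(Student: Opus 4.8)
The plan is to combine the preceding structural results with the classification of the relevant cones. By Proposition \ref{prop:clasof2groups}, the group of invertible elements $G(X)$ is isomorphic to $G_n$ for a unique positive integer $n$, which settles the uniqueness of $n$ and shows that two monoids with different $G(X)$ cannot be isomorphic. By Corollary \ref{cor:monistor}, we may fix a toric structure on $X$ such that $G_n\hookrightarrow X$ is a toric open embedding. Consequently $X = Z_{\sigma_X}$ for a strongly convex cone $\sigma_X$ with $\sigma_X \supset \sigma_{G_n}$, equivalently $\sigma_X^\vee \subset \sigma_{G_n}^\vee$, where $\sigma_{G_n}^\vee = \{(a,b) : a\geq 0\}$ by \eqref{eq:gncones}. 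If $\sigma_X^\vee = \sigma_{G_n}^\vee$ then $X = G_n$ and we are in case (1); otherwise $\sigma_X^\vee$ is a proper two-dimensional subcone (it cannot be lower-dimensional, as $X$ is a surface).

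The heart of the argument is therefore to classify the two-dimensional strongly convex cones $\sigma^\vee \subsetneq \sigma_{G_n}^\vee$ satisfying condition (2) of Lemma \ref{l:conetriangle}, since by that lemma these are exactly the cones whose toric surface inherits a monoid structure from $G_n$. First I would observe that such a cone lies in $\sigma_{G_n}^\vee = \{a \geq 0\}$ and has two edges; because $G_n \subset X$ the cone must still contain a full neighbourhood of the $y$-axis direction appropriate to $\sigma_{G_n}$, but more to the point the edges of $\sigma^\vee$ are spanned by primitive lattice vectors. I would then split into cases according to whether both edges meet the open half-plane $\{a>0\}$ or one of them is the ray $\{a=0\}$. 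The condition from Lemma \ref{l:conetriangle} forces, for each generator $(a',b')$ with $a'>0$, that $(0,b')$ and $(0,b'+na')$ both lie in the cone; running this over the extreme rays pins down which portions of the $y$-axis are contained, and a short convexity argument shows the cone must be one of exactly two shapes: either it contains the ray $(0,1)$ and its other edge is a primitive vector $(a,b)$ with $b\geq 0$ (the "$b' > 0$ everywhere" case), giving $\sigma^\vee(n,a,b)$ and hence $X_n^{a,b}$; or it contains the ray $(0,-1)$ and its other edge is the primitive vector $(a,-na-b)$ with $b\geq 0$ (the case where every generator has $b' \leq -na'$), giving $\bar\sigma^\vee(n,a,b)$ and hence $Y_n^{a,b}$. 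Primitivity of the generator forces $\gcd(a,b)=1$ in both cases, and $a>0$ since the cone is proper. I would treat carefully the edge case $b=0$: then $(a,0)$ and $(a,-na)$ are the generators, and one checks these still satisfy Lemma \ref{l:conetriangle} and give genuinely two-dimensional cones.

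It remains to argue that every such cone indeed occurs from an \emph{isomorphism class} of monoid and that the list is exhaustive and irredundant. Exhaustiveness is precisely the cone classification above together with Lemma \ref{l:conetriangle}: any normal noncommutative affine monoid surface with $G(X)\cong G_n$ has $\KK[X]=\KK[\sigma^\vee\cap M]$ for such a $\sigma^\vee$, and the restriction of the comultiplication is the unique monoid structure extending that of $G_n$, since a morphism $X\times X\to X$ restricting to $\mu$ on the dense open $G_n\times G_n$ is determined by $\mu$. Irredundancy — that $G_n$, the $X_n^{a,b}$, and the $Y_n^{a,b}$ are pairwise non-isomorphic, and that the parameters $(a,b)$ are uniquely determined — is exactly the content of the Proposition preceding this theorem, via the invariants $\mathfrak{L}_k$ and the distinction using the element $x^n$; and $G_n$ is distinguished from the others as the unique case where $X$ is a group. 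The main obstacle I anticipate is the cone classification step: one must verify cleanly that condition (2) of Lemma \ref{l:conetriangle}, applied at the extreme rays together with convexity, genuinely forces the dichotomy and does not admit further "hybrid" cones (for instance a cone with one edge on $\{a=0\}$ pointing in the $+y$ direction and the other generator having $b'<0$). Ruling such configurations out amounts to checking that the required $y$-axis points $(0,b')$ and $(0,b'+na')$ cannot simultaneously lie in a strongly convex cone unless the cone degenerates or falls into one of the two families — a finite but slightly delicate case analysis that I would organize by the sign of the second coordinate of the non-vertical extreme ray.
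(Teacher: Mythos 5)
Your proposal is correct and takes essentially the same route as the paper: reduce via Proposition \ref{prop:clasof2groups} and Corollary \ref{cor:monistor} to classifying the proper two-dimensional subcones of $\sigma^\vee_{G_n}$ satisfying condition (2) of Lemma \ref{l:conetriangle}, and then invoke the preceding proposition (the $\mathfrak{L}_k$ invariants and the $x^n$ argument) for pairwise non-isomorphism and uniqueness of the parameters. The ``delicate case analysis'' you anticipate collapses at once in the paper: condition (2) forces a nonzero lattice point of the $y$-axis into $\sigma^\vee_X$, properness of the subcone forces exactly one of $(0,1)$, $(0,-1)$ to lie in it, and applying condition (2) to an arbitrary point $(a',b')$ then gives $b'\geq 0$ in the first case and $b'+na'\leq 0$ in the second, so the cone is $\sigma^\vee(n,a,b)$ or $\bar\sigma^\vee(n,a,b)$ with no hybrid configurations possible.
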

\begin{proof}
    If $X$ is a group, then, by Proposition \ref{prop:clasof2groups}, it is isomorphic to $G_n$ for
    the unique $n$.
    
    So, we assume that $X$ is not a group and $G(X)\cong G_n$ for some $n$. Fix an embedding
    of the group of invertible elements $G_n \hookrightarrow X$. Then, by Corollary \ref{cor:monistor},
    the toric structure on $G_n$ extends to a toric structure on $X$ and $\KK[X]$ is obtained from a
    convex two-dimensional proper subcone $\sigma^\vee_X\subset \sigma^\vee_{G_n}$. Moreover, the
    subcone $\sigma^\vee_X$ satisfies the second condition of Lemma $\ref{l:conetriangle}$. 

    Particularly, $\sigma^\vee_X$ has points on $y$-axis, so either $(0,1)$, or $(0,-1)$ are in
    $\sigma^\vee_X$. Since $\sigma^\vee_X$ is a proper subcone of $\sigma^\vee_X$, only one of
    these points is in $\sigma^\vee_X$.

    Suppose that $(0,1)\in \sigma^\vee_X$. If $(a',b')\in\sigma^\vee_X$, then $(0,b')$ is in
    $\sigma^\vee_X$, so $b'\geq 0$. So, there are unique $a$ and $b$ such that
    $\sigma^\vee_X=\sigma^\vee(n,a,b)$ and $X$ is isomorphic to $X_n^{a,b}$.

    Suppose now that $(0,-1)\in \sigma^\vee_X$. If $(a',b')\in\sigma^\vee_X$, then $(0,b'+na')$ is
    in $\sigma^\vee_X$, so $b'\leq -na'$. Again, there are unique $a$ and $b$ such that
    $\sigma^\vee_X=\bar\sigma^\vee(n,a,b)$.
\end{proof}
\begin{corollary} \label{cor:affmonoids} 
    Every noncommutative monoid structure on the affine space $\mathbb{A}^2$ up to isomorphism is
    given on elements $(\tx_1,\ty_1),(\tx_2,\ty_2)\in \mathbb{A}^2$ either by multiplication
    \[
        (\tx_1, \ty_1)\cdot (\tx_2, \ty_2) = (\tx_1\ty_2^b+\ty_1^{b+n}\tx_2,
        \ty_1\ty_2),
    \]
    or by
    \[
        (\tx_1, \ty_1)\cdot (\tx_2, \ty_2) = (\tx_1\ty_2^{b+n}+\ty_1^{b}\tx_2,
        \ty_1\ty_2)
    \]
    for some integers $n>0$ and $b\geq 0$.
\end{corollary}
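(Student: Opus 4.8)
The plan is to reduce to Theorem~\ref{t:convconeclass} and then determine which of the monoids $G_n$, $X_n^{a,b}$, $Y_n^{a,b}$ have underlying variety isomorphic to $\mathbb{A}^2$, finally rewriting the comultiplication in suitable coordinates. Since a monoid isomorphism is in particular an isomorphism of varieties, and $\mathbb{A}^2$ is smooth with only constant units, the group $G_n$ is excluded at once: as a variety $G_n\cong\GG_a\times\GG_m$, so $\KK[G_n]=\KK[x,y,y^{-1}]$ contains the non-constant unit $y$, whereas $\KK[\mathbb{A}^2]$ does not. Hence any noncommutative monoid structure on $\mathbb{A}^2$ is isomorphic, as a monoid, to some $X_n^{a,b}$ or some $Y_n^{a,b}$.

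Next I would pin down which of these are smooth. A normal affine toric surface attached to a two-dimensional strongly convex cone is smooth precisely when that cone — equivalently its dual cone — is generated by a $\ZZ$-basis of the lattice, and in that case the surface is $\mathbb{A}^2$. For $X_n^{a,b}$ the dual cone $\sigma^\vee(n,a,b)$ is generated by $(0,1)$ and $(a,b)$, whose determinant is $-a$; so $X_n^{a,b}\cong\mathbb{A}^2$ iff $a=1$ (and then $\gcd(a,b)=1$ is automatic). The analogous computation with the generators $(0,-1)$ and $(a,-na-b)$ of $\bar\sigma^\vee(n,a,b)$ gives determinant $a$, so $Y_n^{a,b}\cong\mathbb{A}^2$ iff $a=1$ as well. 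This leaves exactly the families $X_n^{1,b}$ and $Y_n^{1,b}$ with $n>0$, $b\geq0$.

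Finally I would translate the comultiplication into coordinates. For $X_n^{1,b}$ the semigroup $\sigma^\vee(n,1,b)\cap M$ is generated by $(0,1)=\deg y$ and $(1,b)=\deg(xy^b)$, so $\KK[X_n^{1,b}]=\KK[\tx,\ty]$ with $\ty=y$ and $\tx=xy^b$; substituting into \eqref{eq:comultipongn} gives $\mu^*(\tx)=\tx\otimes\ty^b+\ty^{b+n}\otimes\tx$ and $\mu^*(\ty)=\ty\otimes\ty$, which reads off as the first displayed multiplication. For $Y_n^{1,b}$ one instead sets $\ty=y^{-1}$ and $\tx=xy^{-n-b}$, so that $\KK[Y_n^{1,b}]=\KK[\tx,\ty]$; the same substitution, using $(y\otimes y)^{-n-b}=y^{-n-b}\otimes y^{-n-b}$, yields $\mu^*(\tx)=\tx\otimes\ty^{b+n}+\ty^{b}\otimes\tx$ and $\mu^*(\ty)=\ty\otimes\ty$, i.e. the second displayed multiplication.

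The only step that is more than bookkeeping is the smoothness criterion for normal affine toric surfaces together with the observation that it forces $a=1$; once that is in hand, the coordinate changes and the reading-off of the product maps are routine, so I would present those compactly rather than in full detail.
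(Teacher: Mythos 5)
Your proposal is correct and follows essentially the same route as the paper: reduce via Theorem~\ref{t:convconeclass}, use the toric smoothness criterion (ray generators of $\sigma^\vee$ forming a basis of $M$, hence $a=1$) to single out $X_n^{1,b}$ and $Y_n^{1,b}$, and then rewrite the comultiplication \eqref{eq:comultipongn} in the generators $y,\,xy^{b}$ and $y^{-1},\,xy^{-b-n}$ respectively. The only additions beyond the paper's argument are the explicit exclusion of $G_n$ via non-constant units and the determinant computation, both harmless elaborations of steps the paper leaves implicit.
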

\begin{proof}
    The toric surface is isomorphic to $\mathbb{A}^2$ if and only if ray generators of the
    corresponding cone $\sigma^\vee_{\mathbb{A}^2}$ form a basis of $M$. Thus, all the
    noncommutative monoid structures on $\mathbb{A}^2$ are isomorphic to $X_n^{1,b}$ or
    $Y_n^{1,b}$.

    The group algebra $\KK[X_n^{1,b}]$ is generated freely by the elements $y$ and $xy^b$. Writing
    down the comultiplication we obtain
    \[
        xy^b\mapsto xy^b\otimes y^b  + y^{b+n}\otimes xy^b,
    \]
    \[
        y \mapsto y\otimes y.
    \]
    This comultiplication induces the first multiplication from the statement. 

    In case of $Y_n^{1,b}$ we use the generators $y^{-1}$ and $xy^{-b-n}$ with comultiplication
    \[
        xy^{-b-n}\mapsto xy^{-b-n}\otimes y^{-b-n}  + y^{-b}\otimes xy^{-b-n},
    \]
    \[
        y^{-1} \mapsto y^{-1}\otimes y^{-1}.
    \]
    This comultiplication induces the second multiplication.
\end{proof}

\begin{example}
    Consider the monoid $X_n^{2,2k+1}$ for integers $n>0,~k\geq 0$. The algebra $\KK[X_n^{2,2k+1}]$
    has generators \[f_1=y,~f_2=xy^{k+1},\text{ and } f_3=x^2y^{2k+1}\] with the only relation
    $f_2^2=f_1f_3$ (see \cite[Example 1.1.6]{CLS11}). So $X_n^{2,2k+1}$ is isomorphic to a closed
    subvariety of $\mathbb{A}^3$ given by $\{(\tx, \ty, \tz)\in \mathbb{A}^3\colon \tx\tz=\ty^2\}$.

    The comultiplication is given by 
    \begin{align*}
        f_1&=y\mapsto y\otimes y=f_1\otimes f_1,\\ 
        f_2&=xy^{k+1}\mapsto xy^{k+1}\otimes y^{k+1}+y^{n+k+1}\otimes xy^{k+1}=f_2\otimes
        f_1^{k+1}+f_1^{n+k+1}\otimes f_2, \\
        f_3&=x^2y^{2k+1}\mapsto x^2y^{2k+1}\otimes y^{2k+1}+2(xy^{n+2k+1}\otimes xy^{2k+1})+
        y^{n+2k+1}\otimes x^2y^{2k+1}=\\ & =f_3\otimes f_1^{2k+1}+2(f_2f_1^{n+k}\otimes
        f_2f_1^{k})+f_1^{n+2k+1}\otimes f_3.
    \end{align*}
    Thus, the multiplication of the elements $(\tx_1, \ty_1, \tz_1), (\tx_2, \ty_2,\tz_2)\in
    X_n^{2,2k+1}$ is given by
    \[
        (\tx_1, \ty_1, \tz_1) \cdot (\tx_2, \ty_2,\tz_2)=(\tx_1 \tx_2,
        \ty_1\tx_2^{k+1}+\tx_1^{n+k+1}\ty_2,
        \tz_1\tx_2^{2k+1}+2\tx_1^{n+k}\ty_1\tx_2^{k}\ty_2+\tx_1^{n+2k+1}\tz_2).
    \]

    Observe that here we can take $n=0$ and obtain the commutative comultiplication from
    \cite[Example 10]{DZ21}.
\end{example}

\subsection{Demazure roots and comultiplications}

Now we are going to describe the monoid structures on the toric surface in terms of Demaure roots.
We say that an algebraic monoid $X$ has rank 1, if the maximal torus of $G(X)$ is one-dimensional.
We have already seen that all noncommutative two-dimensional monoids have rank 1. This paragraph
covers also commutative two-dimensional monoids of rank 1 as a special case.

Let $\sigma \subset N$ be a strongly convex two-dimensional cone. Suppose that $p \in N$ is a
ray generator of $\sigma$ and $e\in M$ is a Demazure root associated to $p$. Let $v$ be a ray
generator of $\sigma^\vee$ dual to $p$, that is, $\langle p, v \rangle = 0$. We need the following
lemma.
\begin{lemma} \label{l:demazbasis}
    The pair $(-e, v)$ form a basis of $M$.
\end{lemma}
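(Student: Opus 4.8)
The plan is to use the two defining conditions of a Demazure root together with the definition of $v$ to show that the $2\times 2$ integer matrix whose rows express $-e$ and $v$ in a fixed basis of $M$ has determinant $\pm 1$. First I would set up coordinates: since $\sigma$ is strongly convex and two-dimensional, $p$ is one of its two primitive ray generators; call the other one $q$. Then $(p, q)$ need not be a basis of $N$, but I can still work with the dual-pairing data. The key numerical facts are $\langle e, p\rangle = -1$, so $\langle -e, p\rangle = 1$, and $\langle v, p\rangle = 0$ by the choice of $v$ as the ray generator of $\sigma^\vee$ dual to $p$.

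The cleanest route is the following. Pick any $w \in M$ with $\langle w, p \rangle = 1$ (such $w$ exists because $p$ is primitive, hence part of a basis of $N$, and $M$ is the dual lattice). Then $\{w, v'\}$ is a basis of $M$ for a suitable primitive $v'$ with $\langle v', p\rangle = 0$; since $v$ is the primitive generator of the ray $\sigma^\vee \cap p^\perp$, we have $v = \pm v'$, and in fact the sign is fixed so that $v \in \sigma^\vee$. So $\{w, v\}$ is a basis of $M$. Now write $-e = \alpha w + \beta v$ with $\alpha, \beta \in \ZZ$. Pairing with $p$ gives $\langle -e, p\rangle = \alpha \langle w, p\rangle = \alpha$, and since $\langle -e, p\rangle = 1$ we get $\alpha = 1$. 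Hence $-e = w + \beta v$, and the change-of-basis matrix from $\{w, v\}$ to $\{-e, v\}$ is $\left(\begin{smallmatrix} 1 & \beta \\ 0 & 1 \end{smallmatrix}\right)$, which has determinant $1$. Therefore $(-e, v)$ is also a basis of $M$.

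I would double-check the one subtle point: that $v$ can indeed be completed to a basis together with some $w$ satisfying $\langle w, p\rangle = 1$, and that this $w$ can be chosen so that the relevant determinant computation is legitimate — but this is immediate from the fact that $v$ is primitive (a ray generator of a cone is primitive by definition) and lies in the rank-one sublattice $M \cap p^\perp$, which is a direct summand of $M$ because $p$ is primitive. The main obstacle, such as it is, is bookkeeping: making sure the sign conventions for $v$ and $e$ are consistent and that the condition $\langle e, q\rangle \geq 0$ (for the other ray generator $q$) is not actually needed here — indeed only $\langle e, p\rangle = -1$ and $\langle v, p\rangle = 0$ enter the argument, so the lemma holds for any $e$ with $\langle e, p\rangle = -1$, the full Demazure-root hypothesis being stronger than necessary for this particular statement.
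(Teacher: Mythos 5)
Your proposal is correct, and it rests on the same two facts as the paper's proof: $\langle -e, p\rangle = 1$ and that the primitive element $v$ generates the rank-one direct summand $M\cap p^{\perp}$. The paper just packages this more directly—for arbitrary $w\in M$ the element $w+\langle p,w\rangle e$ lies in $p^{\perp}\cap M=\ZZ v$, so $w=bv-\langle p,w\rangle e$—whereas you pass through the auxiliary basis $\{w,v\}$ and a unimodular change-of-basis matrix; your closing remark that only $\langle e,p\rangle=-1$ is used (not the full Demazure-root condition) applies to the paper's argument as well.
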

\begin{proof}
    Take arbitrary $w\in M$. Observe that $w+\langle p, w\rangle e$ is orthogonal to $p$. Since
    $v$ is primitive, $w+\langle p, w\rangle e$ equals $bv$ for some integer $b$ and $w=bv-\langle
    p, w\rangle e$. It follows that every vector can be expressed as a sum of $-e$ and $v$ with
    integer coefficients, hence $(-e,v)$ is a basis of $M$.
\end{proof}
In the basis from Lemma \ref{l:demazbasis} we have $\delta_e=\partial_{x}$ for $x=\chi^{(1,0)}$. This basis
plays a crucial role in the proof of the next theorem.

\begin{theorem}\label{t:demazclass}
Suppose that $\sigma \subset N$ is a strongly convex two-dimensional cone with ray generators
$p_1$ and $p_2$. Let $(e_1, e_2)$ be a pair of Demazure roots associated to $p_i$ for some fixed
$i$. The map 
\begin{equation} \label{eq:demazcomult}
    \chi^u\mapsto \chi^u\otimes \chi^u (1\otimes \chi^{e_1}+\chi^{e_2}\otimes 1)^{\langle p_i, u
    \rangle}
\end{equation}
defines a comultiplication on $\KK[Z_\sigma]$  and the
corresponding monoid structure on $Z_\sigma$ has rank~1. Every monoid structure of rank 1 on
$Z_\sigma$ is isomorphic to such structure for some pair $(e_1, e_2)$.
\end{theorem}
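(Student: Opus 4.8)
The plan is to proceed in three stages: first verify that \eqref{eq:demazcomult} really defines a coassociative comultiplication with counit; second, identify the resulting monoid concretely with one of the monoids already classified (i.e.\ with $G_n$, $X_n^{a,b}$, $Y_n^{a,b}$, or a commutative rank-1 monoid), so that ``rank $1$'' is clear; and third, show conversely that every rank-1 monoid structure on $Z_\sigma$ arises this way. Throughout I would work in the basis $(-e, v)$ supplied by Lemma \ref{l:demazbasis} when $\sigma$ has a Demazure root, writing $x = \chi^{(1,0)}$ (so $x = \chi^{-e}$) and $y = \chi^{(0,1)} = \chi^{v}$; in these coordinates $\langle p_i, u\rangle$ becomes (up to sign) the $x$-exponent, and $\delta_{e} = \partial_x$ as noted right before the theorem. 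The virtue of this change of coordinates is that \eqref{eq:demazcomult} should turn into exactly the comultiplication \eqref{eq:comultipongn} on $\KK[G_n]$ after a suitable rescaling, with $n$ read off from the pair $(e_1, e_2)$; concretely, $\chi^{e_1}$ and $\chi^{e_2}$ are both supported on the line orthogonal to $p_i$ shifted by the root, so $\chi^{e_2} = \chi^{e_1}\cdot \chi^{e_2 - e_1}$ and $e_2 - e_1$ is a multiple of $v$, which will produce the $y^n$ factor.

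For the first stage — that \eqref{eq:demazcomult} is a well-defined comultiplication — I would check three things. (a) Well-definedness: the right-hand side must lie in $\KK[Z_\sigma]\otimes\KK[Z_\sigma]$, i.e.\ expanding $(1\otimes\chi^{e_1}+\chi^{e_2}\otimes 1)^{\langle p_i,u\rangle}$ by the binomial theorem, every monomial $\chi^{u+ke_2}\otimes\chi^{u+(m-k)e_1}$ (with $m=\langle p_i,u\rangle$, $0\le k\le m$) has both tensor factors in $\sigma^\vee\cap M$. Since $e_1, e_2$ are Demazure roots associated to $p_i$, one has $\langle p_i, u+ke_2\rangle = m - k\langle p_i, -e_2\rangle$... more cleanly: $u + ke_2$ pairs to $m - k \ge 0$ against $p_i$ and pairs nonnegatively against $p_j$ because $u$ does and $\langle p_j, e_2\rangle \ge 0$; similarly for the other factor. (b) Counit: composing with $\chi^u \mapsto 1$ (i.e.\ the counit of $\KK[T]$, which restricts to $Z_\sigma$) on either factor recovers $\chi^u$, since $\chi^{e_i}\mapsto 1$ kills the bracket down to $1$. (c) Coassociativity: the cleanest route is the coordinate change above, reducing to the already-verified coassociativity of \eqref{eq:comultipongn} on $\KK[G_n]$, rather than a direct triple-tensor computation; alternatively one can cite Example \ref{ex:dz21monoids} for the case $e_1 = e_2$ and handle the general case by the same bookkeeping. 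This also shows $G(Z_\sigma)$ contains a one-dimensional unipotent subgroup (the $\GG_a$ generated by $\delta_e$) and a torus, pinning down $\mathrm{rank} = 1$.

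For the converse — every rank-1 monoid structure on $Z_\sigma$ is of this form — I would argue as follows. Let $\mu$ be a rank-1 monoid structure; then $G(Z_\sigma)$ is either $\GG_a\times\GG_m$ (commutative case) or $G_n$ (noncommutative case), and in the latter case Theorem \ref{t:convconeclass} forces $Z_\sigma\cong X_n^{a,b}$ or $Y_n^{a,b}$, while Corollary \ref{cor:affmonoids}/the construction gives the explicit comultiplication on a distinguished generating set. The task is then to recognize that comultiplication as \eqref{eq:demazcomult} for an appropriate choice of $p_i$ and roots $(e_1, e_2)$: the ray $p_i$ is the one whose dual ray generator $v$ lies ``on the $\GG_a$-invariant side'', $e_1$ is the Demazure root with $\delta_{e_1} = \delta_l$ (degree of the left $\GG_a$-action), and $e_2$ the one with $\delta_{e_2}=\delta_r$ (degree $e_1 + n v$, matching the $y^n$). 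One then checks that feeding these into \eqref{eq:demazcomult} reproduces the comultiplication from Construction \ref{def:xynab} on generators, hence everywhere since generators determine $\mu^*$. The commutative rank-1 case is handled identically using the Dzhunusov–Zaitseva formula from the introduction, which is the specialization $e_1 = e_2$.

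The main obstacle I anticipate is bookkeeping the two sign/orientation conventions at once: which ray $p_i$ the roots are attached to, and the sign of $\langle p_i, u\rangle$ relative to the $x$-exponent in the $G_n$-picture — getting this wrong swaps $X_n^{a,b}\leftrightarrow Y_n^{a,b}$ and flips the roles of $\delta_l,\delta_r$. The honest way to neutralize this is to do the whole argument in the Lemma \ref{l:demazbasis} basis, where $p_i$, $-e$, $v$, $x$, $y$ are all normalized simultaneously, so that verifying the match with \eqref{eq:comultipongn} becomes a single transparent substitution rather than a case analysis; the remaining content — that $e_2 - e_1 \in \ZZ_{\ge 0}\cdot v$ and that this integer is the ``$n$'' of $G_n$ — then follows from the definition of Demazure root together with Proposition \ref{prop:clasof2groups}.
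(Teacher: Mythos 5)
Your route is essentially the paper's: pass to the basis $(-e_1, v_i)$ of Lemma \ref{l:demazbasis}, check that in these coordinates \eqref{eq:demazcomult} becomes the comultiplication \eqref{eq:comultipongn} (exactly -- no rescaling is needed), deduce that it is a well-defined comultiplication identifying the monoid with $X_n^{a,b}$ or $Y_n^{a,b}$, and obtain the converse from Theorem \ref{t:convconeclass} together with the Dzhunusov--Zaitseva result for $e_1=e_2$. However, two of your explicit claims are wrong. First, the counit: it is \emph{not} the counit of $\KK[T]$. Applying $\mathrm{id}\otimes\epsilon$ with $\epsilon(\chi^w)=1$ to \eqref{eq:demazcomult} yields $\chi^u(1+\chi^{e_2})^{\langle p_i,u\rangle}$, not $\chi^u$, because $\epsilon$ acts on only one tensor factor and does not ``kill the bracket down to $1$''. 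The unit of these monoids does not lie in the open torus orbit (in the $G_n$-picture it is the point $(0,1)$, whose $\GG_a$-coordinate vanishes), and the correct counit is $\epsilon(\chi^u)=1$ if $\langle p_i,u\rangle=0$ and $\epsilon(\chi^u)=0$ otherwise. Your plan survives only because the reduction to \eqref{eq:comultipongn} imports the counit of $\KK[G_n]$ automatically; the argument you actually gave for this step fails as stated.

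Second, the sign of $n$ cannot be dispensed with. From $\langle e_1,p_i\rangle=\langle e_2,p_i\rangle=-1$ you only get $e_2-e_1=n v_i$ for some integer $n$, and the Demazure condition at the other ray gives $b+na\geq 0$, which allows $n<0$ (precisely when $b\geq a$, e.g.\ $n=-1$). So your concluding claim that $e_2-e_1\in\ZZ_{\geq 0}\cdot v$ ``follows from the definition of Demazure root'' is false, and the case analysis you hoped to avoid is genuinely needed: for $n>0$ the substitution $x=\chi^{(1,0)}$, $y=\chi^{(0,1)}$ turns \eqref{eq:demazcomult} into \eqref{eq:comultipongn} and gives $X_n^{a,b}$, while for $n<0$ one must instead take $y=\chi^{(0,-1)}$ to land in $\KK[G_{-n}]$, obtaining $Y_{-n}^{a,\,b+na}$. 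This is exactly the dichotomy in the paper's proof; suppressing it would make every pair of roots appear to produce an $X$-type monoid and would lose the $Y$-type structures in the forward direction.
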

\begin{proof}
    In case $e_1=e_2$ we obtain a cocommutative comultiplication from \cite[Theorem 3]{DZ21}. It is proved there
    that every commutative monoid structure is isomorphic to one of this form (see also Example~\ref{ex:dz21monoids}). So, we may assume that $e_1\neq e_2$.

    Formula \eqref{eq:demazcomult} does not depend on the basis. So we may use Lemma
    \ref{l:demazbasis} and take the basis $(-e_1, v_i)$, where $v_i$ is the ray generator of
    $\sigma^\vee$, dual to $p_i$. In this basis, the second ray generator of $\sigma^\vee$ has the
    form $(a,b)$ for $a> 0$. The ray generator $p_j$ for $j\neq i$ is given by $(-b, a)$ in the
    dual basis. The second Demazure root has the form $e_2=(-1, n)$ for some nonzero integer $n$.
    The nonnegativity condition in the definition of Demazure roots implies two inequalities:
    \[
        \langle p_j, e_1\rangle = b \geq 0,
    \]
    and 
    \[
        \langle p_j, e_2 \rangle = b+na \geq 0.
    \]

    We now have two possible cases depending on the sign of $n$. 

    If $n>0$, write $x$ for $\chi^{(1,0)}$ and $y$ for $\chi^{(0,1)}$. Then formula
    \eqref{eq:demazcomult} transforms into
    \[
        x^{a'}y^{b'}\mapsto x^{a'}y^{b'}\otimes x^{a'}y^{b'} (1\otimes x^{-1} +
        x^{-1}y^n \otimes 1)^{a'}=(y\otimes y)^{b'}(x\otimes 1 + y^n\otimes x)^{a'},
    \]
    which is the same as the comultiplication in \eqref{eq:comultipongn}. Since the cone
    $\sigma^\vee$ has ray generators $(a,b)$ and $(0,1)$ with $a>0$ and $b\geq 0$, this is a
    comultiplication, which turns $Z_\sigma$ into a monoid, which is isomorphic to $X^{a,b}_n$.

    Suppose now that $n<0$. This time write $x$ for $\chi^{(1,0)}$ and $y$ for $\chi^{(0,-1)}$.
    Using the same argument, we obtain the comultiplication in \eqref{eq:comultipongn} and the
    corresponding monoid structure is isomorphic to $Y_{-n}^{a, b+na}$.

\end{proof}

Let $\widehat\tau\colon M_\QQ \to M_\QQ$ be the linear map, which swaps the ray generators of
$\sigma^\vee$. 
\begin{corollary}
   If $\widehat\tau(M)\neq M$, then all the monoid structures on $Z_\sigma$ from Theorem
   \ref{t:demazclass} are pairwise non-isomorphic. If $\widehat\tau(M) = M$, then the only isomorphisms
   are between the monoid structures induced by pairs $(e_1, e_2)$ and $(\widehat\tau(e_1),
   \widehat\tau(e_2))$.
\end{corollary}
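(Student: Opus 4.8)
The plan is to reduce the classification question for the family \eqref{eq:demazcomult} to the classification of the monoids $X_n^{a,b}$ and $Y_n^{a,b}$ already obtained in Theorem~\ref{t:convconeclass}, and then to transport the isomorphism information through the dictionary set up in the proof of Theorem~\ref{t:demazclass}. First I would fix the ray generator $p_i$ once and for all (swapping $p_1,p_2$ corresponds exactly to applying $\widehat\tau$, so the pair $(e_1,e_2)$ and $(\widehat\tau(e_1),\widehat\tau(e_2))$ give genuinely the same list up to relabelling, which already explains the second clause). With $p_i$ fixed, a Demazure root $e$ associated to $p_i$ satisfies $\langle e,p_i\rangle=-1$, so in any basis adapted to $p_i$ via Lemma~\ref{l:demazbasis} such a root is $(-1,m)$ for an integer $m$; the two roots $e_1,e_2$ are then recorded by the integer pair $(m_1,m_2)$, and the case $m_1=m_2$ is the commutative one handled by \cite{DZ21}.

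The key computation is the one already performed inside the proof of Theorem~\ref{t:demazclass}: choosing the basis $(-e_1,v_i)$ turns \eqref{eq:demazcomult} into the standard comultiplication \eqref{eq:comultipongn} on $G_n$ with $n=m_2-m_1$ (up to sign of $n$), and identifies $Z_\sigma$ with $X_n^{a,b}$ when $n>0$ and with $Y_{-n}^{a,b+na}$ when $n<0$, where $(a,b)$ is the second ray generator of $\sigma^\vee$ in that basis and is thus \emph{intrinsic} to $Z_\sigma$ (it does not depend on which root we called $e_1$). So the isomorphism type of the resulting monoid is completely controlled by two numbers: the sign-and-magnitude datum $n=m_2-m_1$ and, when $n<0$, the shift from $b$ to $b+na$. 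Now I would invoke the pairwise non-isomorphism statement already proved for Construction~\ref{def:xynab}: $X_n^{a,b}$ and $Y_{n'}^{a',b'}$ are all pairwise distinct for distinct parameter tuples, and $X_n^{a,b}\not\cong Y_{n'}^{a',b'}$ ever. Translating back, two pairs $(e_1,e_2)$ and $(e_1',e_2')$ (with $p_i$ fixed) give isomorphic monoids if and only if they give the same $n$ and the same shift, i.e.\ if and only if $e_1=e_1'$ and $e_2=e_2'$, because $e_1$ determines the basis-normalisation and $e_2-e_1=(0,n)$ is then forced. This proves that, for a fixed choice of $p_i$, the structures are pairwise non-isomorphic.

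It remains to decide when a structure attached to $p_i$ can coincide with one attached to $p_j$, $j\ne i$. An isomorphism of toric surfaces $Z_\sigma\to Z_\sigma$ is induced by a lattice automorphism of $M$ preserving $\sigma^\vee$; the only nontrivial such automorphism, when it exists, is $\widehat\tau$ (swapping the two rays), and it exists precisely when $\widehat\tau(M)=M$. If $\widehat\tau(M)\ne M$, no such swap is available: the roots associated to $p_1$ and those associated to $p_2$ live on opposite rays and cannot be matched, so \emph{all} the structures in Theorem~\ref{t:demazclass} (over all choices of $i$) are pairwise non-isomorphic. If $\widehat\tau(M)=M$, then pulling back by $\widehat\tau$ sends the comultiplication \eqref{eq:demazcomult} for $(e_1,e_2)$ (associated to $p_i$) to the one for $(\widehat\tau(e_1),\widehat\tau(e_2))$ (associated to $p_j$) — this is immediate from the coordinate-free form of \eqref{eq:demazcomult}, since $\widehat\tau$ permutes $p_1,p_2$ and hence the factors $1\otimes\chi^{e_1}$ and $\chi^{e_2}\otimes 1$ transform correctly — and conversely any isomorphism between two such structures must be toric (the torus is the unit component of the group of invertibles, hence canonical) and therefore realised by $\widehat\tau$. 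The one step I expect to need the most care is this last converse: arguing that an abstract monoid isomorphism between two structures on $Z_\sigma$ restricts to a toric automorphism, which follows because $G(X)$ with its maximal torus is determined by the monoid, so the isomorphism carries $T$ to $T$ and is toric; then the induced lattice map preserves $\sigma^\vee$ and must be $\mathrm{id}$ or $\widehat\tau$.
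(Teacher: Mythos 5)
Your overall strategy is the paper's: run the identification from the proof of Theorem \ref{t:demazclass} to a normal form $X_n^{a,b}$ or $Y_n^{a,b}$, use the pairwise non-isomorphism of Construction \ref{def:xynab}, and handle the two rays via $\widehat\tau$. But the central step for a fixed ray generator $p_i$ is argued incorrectly. You claim that the coordinate pair $(a,b)$ of the second ray generator of $\sigma^\vee$ in the basis $(-e_1,v_i)$ is \emph{intrinsic} to $Z_\sigma$, independent of $e_1$. That is false: $a=\langle w,p_i\rangle$ is intrinsic, but $b=\langle e_1,p_j\rangle$ visibly depends on $e_1$ (replacing $e_1$ by $e_1+v_i$ changes $b$ to $b+a$). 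Your premise would in fact contradict the corollary you are proving: the pairs $(e_1,e_2)$ and $(e_1+v_i,e_2+v_i)$ have the same $n=m_2-m_1$, so under your reading both would land in the same normal form and give isomorphic monoids, whereas they actually produce $X_n^{a,b}$ and $X_n^{a,b+a}$, which are non-isomorphic. Consequently your inference ``same $n$ and same shift, i.e.\ $e_1=e_1'$ and $e_2=e_2'$'' is a non sequitur. The correct argument goes the other way: the normal-form parameters recover $\langle e_1,p_j\rangle$ (namely $b$, resp.\ $b+na$ in the $Y$-case) and $\langle e_2,p_j\rangle$, and since $\langle e_k,p_i\rangle=-1$ these pairings pin down $e_1$ and $e_2$; equivalently, as in the paper, no toric automorphism fixing both rays can move one pair of roots to a different one.

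A secondary problem is your justification that an abstract isomorphism between two such monoid structures may be taken toric: you say ``the torus is the unit component of the group of invertibles, hence canonical,'' but $G(X)\cong G_n$ (or $\GG_a\times\GG_m$ in the commutative rank-one case) is connected and is not a torus; the two-dimensional torus $T$ acts by combined left and right translations and is not a subgroup of $G(X)$. The paper avoids this issue: the identifications of each structure with its normal form $X_n^{a,b}$ or $Y_n^{a,b}$ constructed in Theorem \ref{t:demazclass} are themselves toric, so if two structures are isomorphic they share a normal form (by the pairwise non-isomorphism of the normal forms) and hence admit a toric isomorphism, whose lattice map preserves $\sigma^\vee$ and is therefore either the identity or $\widehat\tau$, the latter occurring only when $\widehat\tau(M)=M$. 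With these two repairs your outline matches the paper's proof; as written, however, the key non-isomorphism step rests on a false claim.
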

\begin{proof}
    Again, the commutative case is proven in \cite[Corollary 1]{DZ21}, so we consider pairs of
    non-equal Demazure roots.  We have seen in the proof of Theorem \ref{t:demazclass} that a
    noncommutative monoid defined by a pair of Demazure roots is isomorphic to $X_n^{a,b}$ or
    $Y_n^{a,b}$. What is more important that the isomorphism from the proof is toric. This implies
    that if two such monoids are isomorphic then the isomorphism can be chosen to be toric.

    If $(e_1, e_2)$ and $(e_1', e_2')$ are non-equal pairs of Demazure roots associated to a
    single ray generator $p_i$, then they are not isomorphic, since there is no toric automorphism,
    which maps $(e_1, e_2)$ to $(e_1', e_2')$. 

    If $(e_1, e_2)$ and $(e_1', e_2')$ are associated to different ray generators, then the only
    possible toric automorphism, which maps $(e_1, e_2)$ to $(e_1', e_2')$ should be induced by
    $\widehat\tau$. But $\widehat\tau$ induces an automorphism if and only if $\widehat\tau(M)=M$.
\end{proof}
\begin{remark}
    Note that there is an inaccuracy in the second part of \cite[Corollary 1]{DZ21}. It states that there are infinitely many non-isomorphic commutative monoid
    structures on the surface with one-dimensional cone. The variety corresponding to
    one-dimensional cone is $\mathbb{A}^1\times\KK^\times$. Every commutative monoid structure of rank one on
    this variety should be isomorphic to the group $\GG_a\times\GG_m$. That is why we assume that the cone is full-dimensional in Theorem \ref{t:demazclass}.
\end{remark}

\section{Algebraic properties of monoids} \label{s:algprop}

\subsection{Opposite monoids}

Let $X$ be an algebraic monoid. By $X^{op}$ we denote the \emph{opposite} algebraic monoid. It is
isomorphic to $X$ as an algebraic variety and the multiplication is given by reversing the order:
\[ \tx^{op} \ty^{op} = (\ty \tx)^{op}, \] where $\tx, \ty\in X$ and $-^{op}$ is the canonical
variety isomorphism from $X$ to $X^{op}$. Note that $X$ is canonically isomorphic to
$(X^{op})^{op}$ with respect to the map $(-^{op})^{op}$. Every group is isomorphic to its opposite
via the inversion map $\tx \mapsto (\tx^{-1})^{op}$.

Let $\mu_X^*\colon \KK[X]\to \KK[X]\otimes_\KK \KK[X]$ be a comultiplication for $X$. We can
identify the algebra $\KK[X^{op}]$ with $\KK[X]$. Then the comultiplication for $X^{op}$ is given
by $\mu_{X^{op}}^*=\varkappa \circ \mu_X$, where $\varkappa(f\otimes g)=g\otimes f$ for
$f,g\in\KK[X]$.

\begin{proposition}
    The monoid $Y_n^{a,b}$ is isomorphic to the opposite of $X_n^{a,b}$:
   \[
       Y_n^{a,b}\cong (X_n^{a,b})^{op}.
   \]
\end{proposition}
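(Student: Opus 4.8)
The plan is to compute the comultiplication on $\KK[Y_n^{a,b}]$ explicitly, apply the flip $\varkappa$ to obtain the comultiplication of $(Y_n^{a,b})^{op}$, and then exhibit a graded algebra isomorphism onto $\KK[X_n^{a,b}]$ that intertwines the two comultiplications. First I would recall that $\KK[Y_n^{a,b}] = \KK[\bar\sigma^\vee(n,a,b)\cap M]$ with $\bar\sigma^\vee(n,a,b)$ generated by $(0,-1)$ and $(a,-na-b)$, and that on a monomial $x^{a'}y^{b'}$ in this algebra the comultiplication inherited from $G_n$ is, by \eqref{eq:comultipongn},
\[
    x^{a'}y^{b'}\mapsto (x\otimes 1 + y^n\otimes x)^{a'}(y\otimes y)^{b'}
    = \sum_{i=0}^{a'}\binom{a'}{i} x^{a'-i}y^{b'+ni}\otimes x^i y^{b'}.
\]
Applying $\varkappa$ swaps the two tensor factors, so the comultiplication of $(Y_n^{a,b})^{op}$ sends $x^{a'}y^{b'}$ to $\sum_i \binom{a'}{i} x^i y^{b'}\otimes x^{a'-i}y^{b'+ni}$, which after reindexing $i\mapsto a'-i$ is $(x\otimes y^n + 1\otimes x)^{a'}(y\otimes y)^{b'}$.

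Next I would produce the isomorphism of varieties. Since $Y_n^{a,b}\cong X_n^{a,b}$ as a variety already follows from the combinatorics only in special cases, the honest approach is to define an algebra homomorphism $\phi\colon \KK[X_n^{a,b}]\to \KK[(Y_n^{a,b})^{op}]=\KK[Y_n^{a,b}]$ directly on the character lattice. The cone $\sigma^\vee(n,a,b)$ is generated by $(0,1)$ and $(a,b)$; the cone $\bar\sigma^\vee(n,a,b)$ by $(0,-1)$ and $(a,-na-b)$. The natural candidate is the lattice map $x\mapsto x$, $y\mapsto y^{-1}$, i.e. $\widehat\psi(a',b')=(a',-b')$; one checks $\widehat\psi$ carries the generators $(0,1),(a,b)$ of $\sigma^\vee(n,a,b)$ to $(0,-1),(a,-b)$, but $(a,-b)$ is \emph{not} a generator of $\bar\sigma^\vee(n,a,b)$ unless $b=0$. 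So instead I expect to need the shear $\widehat\psi(a',b') = (a', -b' - na')$, which is unimodular on $M$, sends $(0,1)\mapsto(0,-1)$ and $(a,b)\mapsto (a,-b-na)$ — exactly the ray generators of $\bar\sigma^\vee(n,a,b)$ — hence induces an isomorphism of toric varieties $X_n^{a,b}\to Y_n^{a,b}$. Concretely $\phi(x)=xy^n$ (since $(1,0)\mapsto(1,-n)$) and $\phi(y)=y^{-1}$.

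Finally I would verify that $\phi$ intertwines the comultiplication $\mu^*_{X_n^{a,b}}$ with $\mu^*_{(Y_n^{a,b})^{op}}=\varkappa\circ\mu^*_{Y_n^{a,b}}$. On generators: $\mu^*_{X_n^{a,b}}(x)=x\otimes 1 + y^n\otimes x$ and $\mu^*_{X_n^{a,b}}(y)=y\otimes y$; pushing these through $\phi\otimes\phi$ gives $xy^n\otimes 1 + y^{-n}\otimes xy^n$ and $y^{-1}\otimes y^{-1}$, and I must check these agree with $\varkappa\circ\mu^*_{Y_n^{a,b}}$ evaluated on $\phi(x)=xy^n$ and $\phi(y)=y^{-1}$. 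Using $\varkappa\circ\mu^*_{Y_n^{a,b}}(x)=x\otimes y^n + 1\otimes x$ and $\varkappa\circ\mu^*_{Y_n^{a,b}}(y)=y\otimes y$ from the first paragraph, one computes $\varkappa\circ\mu^*_{Y_n^{a,b}}(xy^n) = (x\otimes y^n + 1\otimes x)(y^n\otimes y^n) = xy^n\otimes y^{2n} + y^n\otimes xy^n$ — this does not literally match, so the computation must be redone carefully, and reconciling these two formulas (likely via the substitution $y\mapsto y^{-1}$ being applied consistently and tracking that in $\KK[Y_n^{a,b}]$ the relevant generator is $y^{-1}$, not $y$) is the one genuinely fiddly point. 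The main obstacle is thus purely bookkeeping: choosing the correct unimodular lattice map and then matching the two comultiplication formulas monomial-by-monomial, checking compatibility with the counit is immediate. Once the generators match, the homomorphism property of $\mu^*$ and $\phi$ extends the identity to all of $\KK[X_n^{a,b}]$, and $\widehat\psi$ being unimodular makes $\phi$ an isomorphism, completing the proof.
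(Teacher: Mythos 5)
Your strategy is the paper's: take the unimodular shear $\widehat\psi(a',b')=(a',-b'-na')$, which carries the ray generators $(0,1),(a,b)$ of $\sigma^\vee(n,a,b)$ to $(0,-1),(a,-na-b)$, and check that the induced map of semigroup algebras intertwines $\mu^*_{X_n^{a,b}}$ with $\varkappa\circ\mu^*_{Y_n^{a,b}}$. The one place where your writeup genuinely fails is the final verification, and the mismatch you run into is not an unavoidable bookkeeping subtlety to be ``redone carefully''; it comes from a concrete slip in translating $\widehat\psi$ into characters. Since $\widehat\psi(1,0)=(1,-n)$, the induced map must send $x=\chi^{(1,0)}$ to $\chi^{(1,-n)}=xy^{-n}$, not to $xy^{n}$ (the second coordinate $-n$ contributes $y^{-n}$, as your own computation $\phi(y)=\chi^{(0,-1)}=y^{-1}$ shows). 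Indeed $xy^{n}$ does not even lie in $\KK[\bar\sigma^\vee(n,a,b)\cap M]$, because every lattice point $(a',b')$ of $\bar\sigma^\vee(n,a,b)$ satisfies $b'\le -na'$. As submitted, the anti-homomorphism identity --- which is the entire content of the proposition --- is therefore left unestablished.

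With the corrected images $\phi(x)=xy^{-n}$, $\phi(y)=y^{-1}$ the check closes at once and coincides with the paper's computation: $(\phi\otimes\phi)\mu^*_X(x)=xy^{-n}\otimes 1+y^{-n}\otimes xy^{-n}$, while $\varkappa\,\mu^*_Y(\phi(x))=\varkappa\bigl((x\otimes1+y^{n}\otimes x)(y^{-n}\otimes y^{-n})\bigr)=\varkappa\bigl(xy^{-n}\otimes y^{-n}+1\otimes xy^{-n}\bigr)=xy^{-n}\otimes1+y^{-n}\otimes xy^{-n}$, and both sides send $y$ to $y^{-1}\otimes y^{-1}$. Multiplicativity then extends the identity to all monomials of $\sigma^\vee(n,a,b)\cap M$, and unimodularity of $\widehat\psi$ together with $\widehat\psi_\QQ(\sigma^\vee(n,a,b))=\bar\sigma^\vee(n,a,b)$ makes $\phi$ an isomorphism, exactly as in the paper (whose map $\widehat\varphi$, with $\widehat\varphi(1,0)=(1,-n)$, $\widehat\varphi(0,1)=(0,-1)$, is precisely your $\widehat\psi$). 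So the approach is the right one and the remaining parts of your argument (cone images, counit, extension from generators) are fine; the gap is the unresolved mismatch at the end, which disappears once $xy^{n}$ is replaced by $xy^{-n}$.
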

\begin{proof}
    Let $\widehat\varphi\colon M\to M$ be the lattice automorphism given by $\widehat\varphi(0,1)=(0,-1)$
    and $\widehat\varphi(1,0) = (1, -n)$. Since
    $\widehat\varphi_\QQ(\sigma^\vee(n,a,b))=\bar\sigma^\vee{(n,a,b)}$, it induces a variety
    isomorphism $\varphi\colon Y_n^{a,b} \to X_n^{a,b}$.

    Let $x^{a'}y^{b'}$ be a monomial in $\KK[X_n^{a,b}]$. The comultiplication on it is given by
    \eqref{eq:comultipongn}:
    \[
        x^{a'}y^{b'}\mapsto (x\otimes 1 + y^n\otimes x)^{a'}(y\otimes y)^{b'}.
    \]
    The isomorphism $\widehat\varphi$ maps $x^{a'}y^{b'}$ to
    $\varphi^*(x^{a'}y^{b'})=x^{a'}y^{-na'-b'}$. The comultiplication on $\varphi^*(x^{a'}y^{b'})$
    is given by 
    \[
        x^{a'}y^{-na'-b'}\mapsto (x\otimes 1 + y^n\otimes x)^{a'}(y\otimes y)^{-na'-b'}=(x\otimes
        y^{-n}+1\otimes x)^{a'}(y\otimes y)^{-b'},
    \]
    which is opposite to the comultiplication on $x^{a'}y^{b'}$:
    \[
        \varphi^*((x\otimes 1 + y^n\otimes x)^{a'}(y\otimes y)^{b'})=(x\otimes
        1+y^{-n}\otimes x)^{a'}(y\otimes y)^{-b'}.
    \]
\end{proof}
Till the end of the section we will study algebraic properties of $X_n^{a,b}$. Their counterparts
for $Y_n^{a,b}$ can be obtained automatically due to the isomorphism $Y_n^{a,b}\cong
(X_n^{a,b})^{op}$.

\subsection{Central subgroups and quotients}
Recall from Proposition \ref{prop:centreofgn} that the center $Z(G_n)$ of $G_n$ is a cyclic
subgroup of order $n$. 
\begin{proposition}\label{p:monquot}
    Let $C_m\subset Z(G_{mn})$ be a cyclic subgroup of order $m$. The quotient of $X_{mn}^{a,b}$ by
    $C_m$ is isomorphic to $X_n^{a',b'}$, where $a'=\frac{m}{\gcd(m,b)}a$ and
    $b'=\frac{1}{\gcd(m,b)}b$.
\end{proposition}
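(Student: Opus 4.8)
The plan is to compute the quotient on the level of coordinate algebras, using the fact that $\KK[X_{mn}^{a,b}/C_m]$ is the subalgebra of $C_m$-invariants in $\KK[X_{mn}^{a,b}]$. First I would recall that by Construction \ref{def:xynab} the algebra $\KK[X_{mn}^{a,b}]$ is spanned by the monomials $x^{a'}y^{b'}$ with $(a',b')$ in the cone $\sigma^\vee(mn,a,b)$ generated by $(0,1)$ and $(a,b)$. The subgroup $C_m\subset Z(G_{mn})$ is generated by $(0,\xi_m)$ with $\xi_m$ a primitive $m$-th root of unity (Proposition \ref{prop:centreofgn}), and its action on the group algebra $\KK[x,y,y^{-1}]$ is induced by the embedding $C_m\hookrightarrow G_{mn}$. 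Acting on the left (or right) multiplication, one checks that $(0,\xi_m)$ sends $x\mapsto x$ and $y\mapsto \xi_m y$ — since the $\GG_a$-coordinate $x$ is fixed by elements of the maximal torus under conjugation-type action that defines the toric structure, while $y$ scales. Hence $(0,\xi_m)\cdot x^{a'}y^{b'}=\xi_m^{b'}x^{a'}y^{b'}$, so the monomial is $C_m$-invariant if and only if $m\mid b'$.

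Next I would identify the invariant subalgebra as a semigroup algebra of a subcone of the lattice. The invariant monomials $x^{a'}y^{b'}$ with $m\mid b'$ correspond to lattice points $(a',b')\in\sigma^\vee(mn,a,b)$ lying in the sublattice $M'=\ZZ\times m\ZZ\subset M$. Writing $y_1=y^m$ and keeping $x$, the invariant algebra is $\KK[\sigma^\vee(mn,a,b)\cap M']$, which after the lattice change of coordinates $(a',b')\mapsto(a', b'/m)$ becomes the semigroup algebra of the cone generated by $(0,1)$ and $(a, b/m)$ — but with the caveat that $b/m$ need not be an integer, so one must take the primitive generators. The primitive lattice point on the ray through $(a,b)$ inside $M'$ is $\frac{1}{\gcd(m,b)}(a,b)=(a/\gcd(m,b),\, b/\gcd(m,b))$... wait — after dividing the second coordinate by $m$ the ray is through $(a,b/m)$, and scaling to clear denominators and take the primitive point gives $\bigl(\frac{m}{\gcd(m,b)}a,\ \frac{b}{\gcd(m,b)}\bigr)$. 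This is exactly $(a',b')$ with $a'=\frac{m}{\gcd(m,b)}a$ and $b'=\frac{b}{\gcd(m,b)}$, and since $\gcd(a,b)=1$ one verifies $\gcd(a',b')=1$. So the invariant cone is $\sigma^\vee(n,a',b')$, using that $G_{mn}/C_m\cong G_n$ (Proposition \ref{prop:centreofgn}) so the ambient group of invertibles is indeed $G_n$.

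Finally I would check that the comultiplication is compatible: the comultiplication on $\KK[X_{mn}^{a,b}]$ restricts to the invariant subalgebra (since $C_m$ is central, its invariants form a sub-bialgebra), and under the identification with $\KK[X_n^{a',b'}]$ it matches the comultiplication of Construction \ref{def:xynab}, because the new generators $y_1=y^m$, $x y_1^{?}$ transform according to \eqref{eq:comultipongn} with $n$ replaced by $n$ — here the exponent $n$ arises precisely because $\delta_r=y^{mn}\partial_x$ on $\KK[G_{mn}]$ becomes $y_1^{n}\partial_x$ after setting $y_1=y^m$. I expect the main obstacle to be bookkeeping the lattice and primitivity: getting the formulas $a'=\frac{m}{\gcd(m,b)}a$, $b'=\frac{b}{\gcd(m,b)}$ exactly right (including the edge case $b=0$, where $\gcd(m,0)=m$ and one gets $a'=a$, $b'=0$, i.e. $X_{mn}^{a,0}/C_m\cong X_n^{a,0}$, consistent with $X_{mn}^{a,0}\cong \GG_a$-related surface), and confirming that no non-invariant monomial sneaks in and that the quotient map is toric so that the comultiplication transports correctly.
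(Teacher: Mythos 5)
Your proposal is correct and takes essentially the same route as the paper: the paper simply packages your computation as the pullback along the toric quotient homomorphism $G_{mn}\to G_n$, whose lattice map fixes $(1,0)$ and sends $(0,1)\mapsto(0,m)$, and checks that it carries $\sigma^{\vee}(n,a',b')$ onto $\sigma^{\vee}(mn,a,b)$ — exactly the sublattice $\ZZ\times m\ZZ$, rescaling, and primitivity bookkeeping you carry out by hand on the $C_m$-invariant monomials, with the comultiplication transported automatically because $\varphi^*$ is a bialgebra map. One small quibble: $x$ is invariant not because of any ``conjugation-type'' triviality of the torus action, but simply because the central element $(0,\xi_m)$ acts on the $\GG_a$-coordinate through $\xi_m^{mn}=1$; the conclusion that $x^{a''}y^{b''}$ is invariant iff $m\mid b''$ is unaffected.
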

\begin{proof}
    The homomorphism $\varphi \colon G_{mn}\to G_{mn}/C_m\cong G_n$ is toric and the corresponding
    lattice morphism $\widehat\varphi\colon M\to M$ satisfies $\widehat\varphi(1,0)=(1,0)$,
    $\widehat\varphi(0,1)=(0,m)$. Moreover, $\widehat\varphi(a',b')=(a',mb')=\frac{m}{\gcd(m,b)}(a,b)$, so
    $\widehat\varphi_\QQ(\sigma(n,a',b'))=\sigma(mn, a, b)$. We have a commutative diagram
    % https://q.uiver.app/?q=WzAsNCxbMCwwLCJcXEtLW1hfe21ufV57YSxifV1ee0NfbX0iXSxbMSwwLCJcXEtLW0dfe21ufV1ee0NebX0iXSxbMCwxLCJcXEtLW1hfbl57YScsYid9XSJdLFsxLDEsIlxcS0tbR19uXSJdLFszLDEsIlxcdmFycGhpXioiLDJdLFswLDEsIlxcc3Vic2V0IiwzLHsic3R5bGUiOnsiYm9keSI6eyJuYW1lIjoibm9uZSJ9LCJoZWFkIjp7Im5hbWUiOiJub25lIn19fV0sWzIsMywiXFxzdWJzZXQiLDMseyJzdHlsZSI6eyJib2R5Ijp7Im5hbWUiOiJub25lIn0sImhlYWQiOnsibmFtZSI6Im5vbmUifX19XSxbMiwwLCJcXHZhcnBoaV4qfF97WF9uXnthJyxiJ319IiwyXV0=
    \[\begin{tikzcd}
        {\KK[X_{mn}^{a,b}]^{C_m}} & {\KK[G_{mn}]^{C^m}} \\
        {\KK[X_n^{a',b'}]} & {\KK[G_n]}
        \arrow["{\varphi^*}"', from=2-2, to=1-2]
        \arrow["\subset"{marking}, draw=none, from=1-1, to=1-2]
        \arrow["\subset"{marking}, draw=none, from=2-1, to=2-2]
        \arrow["{\varphi^*|_{X_n^{a',b'}}}"', from=2-1, to=1-1]
    \end{tikzcd}\]
    with $\varphi^*$ being an isomorphism. Since $\widehat\varphi_\QQ$ maps $\sigma(n,a',b')$
    bijectively onto $\sigma(mn, a, b)$, $\varphi^*|_{X_n^{a',b'}}$ is also an isomorphism.
\end{proof}
As a special case, we obtain that every monoid is a quiotient of a monoid on $\mathbb{A}^2$ from Corollary \ref{cor:affmonoids}. Explicitly, we have 
$
X_{an}^{1,b}/C_a \cong X_n^{a,b}.
$

\subsection{Boundary divisor}
For an algebraic monoid $X$ we denote by $R(X)$ the subset of non-invertible elements, so
$R(X)=X\setminus G(X)$. We call it the \emph{boundary divisor}. Obviously, $R(X)$ is closed. It
turns out that in the two-dimensional case $R(X)$ is irreducible as well. An element $\mathbf{0}\in
R(X)$ is called \emph{zero} if for every element $\tx\in X$ we have $\tx\cdot
\mathbf{0}=\mathbf{0}\cdot \tx=\mathbf{0}$.
\begin{theorem} \label{t:noninv}
    The following holds for the boundary divisor $R=R(X_n^{a,b})$ of $X_n^{a,b}$ for integers
    $a,b>0$:
    \begin{enumerate}
        \item $R$ is a closed, irreducible subvariety, which is isomorphic to the affine line
            $\mathbb{A}^1$;            
        \item there is a zero $\mathbf{0}\in R$ and the restriction of the multiplication to $R$ is
            given by
            \[
                \tx\cdot\ty = \mathbf{0}
            \]
            for $\tx, \ty \in R$.

        \item identify $R$ with $\mathbb{A}^1$ such that the zero $\mathbf{0}\in R$ is identified
            with $0\in \mathbb{A}^1$. Then the action of the group $G_n\subset X_n^{a,b}$ by left
            and right multiplications on $R$ is given by
            \[
                (\alpha, \tau)\cdot \tx = \tau^{b+an}\tx,
            \]
            \[
                \tx \cdot (\alpha, \tau)=\tau^b\tx,
            \]
            for $(\alpha, \tau)\in G_n$ and $\tx\in R=\mathbb{A}^1$. 
    \end{enumerate}
\end{theorem}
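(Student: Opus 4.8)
The plan is to work explicitly with the cone $\sigma^\vee(n,a,b)$ generated by $(0,1)$ and $(a,b)$, so that $\KK[X_n^{a,b}]$ is the semigroup algebra on its lattice points, and to use the fact (from Lemma~\ref{l:conetriangle}) that the comultiplication is $x^{a'}y^{b'}\mapsto (x\otimes 1+y^n\otimes x)^{a'}(y\otimes y)^{b'}$. First I would identify $R=R(X_n^{a,b})$ as the vanishing locus of the ideal generated by all characters $\chi^u$ with $u$ in the interior of $\sigma^\vee$; equivalently, $R$ is the closure of the unique $T$-orbit corresponding to the ray $(0,1)$ of $\sigma^\vee$. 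The toric picture gives immediately that this orbit closure is a one-dimensional torus orbit together with a fixed point, hence isomorphic to $\mathbb{A}^1$: concretely, since $(0,1)$ is a ray generator, the dual face is a ray of $\sigma$, and the corresponding orbit closure $V(\text{ray})$ is the toric variety of a one-dimensional cone, i.e.\ $\mathbb{A}^1$. This settles (1), with the fixed point of $R$ (the distinguished point of the maximal cone) playing the role of $\mathbf 0$, and $R\setminus\{\mathbf 0\}$ being a single $T$-orbit, which is where the coordinate on $\mathbb{A}^1$ comes from.

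Next, for (2), I would show $R\cdot R=\{\mathbf 0\}$ by a direct computation in coordinates. The ideal $I(R)\subset\KK[X_n^{a,b}]$ is generated by the interior characters; I claim $\mu^*(I(R))\subset I(R)\otimes\KK[X]+\KK[X]\otimes I(R)$ is not enough — rather, restricting $\mu$ to $R\times R$ and composing with the quotient $\KK[X]\to\KK[R]$, every interior monomial $x^{a'}y^{b'}$ (with $a'>0$, and $b'$ large enough to lie strictly inside) maps under $\mu^*$ to $\sum_i\binom{a'}{i}x^{a'-i}y^{b'+ni}\otimes x^iy^{b'}$, and for each $i$ at least one of the two tensor factors is again an interior monomial (if $0<i<a'$ both are; if $i=0$ the left factor $x^{a'}y^{b'+na'}$ is interior since $a'>0$; if $i=a'$ the right factor $x^{a'}y^{b'}$ is interior), so the whole image lies in $I(R)\otimes\KK[X]+\KK[X]\otimes I(R)$, i.e.\ pulls back to zero on $R\times R$ away from $\mathbf 0$; continuity (closedness of $R$) then forces the product of any two boundary points to equal the fixed point $\mathbf 0$, and the same argument with $\tx$ arbitrary and $\mathbf 0\in R$ gives $\tx\cdot\mathbf 0=\mathbf 0\cdot\tx=\mathbf 0$. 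The one subtlety, which is the main obstacle, is checking that the generators of $I(R)$ really are the strictly-interior characters and that $a,b>0$ guarantees enough interior monomials to cut out exactly $R$ and to make the case analysis on $i$ go through — this needs a careful look at which lattice points of $\sigma^\vee(n,a,b)$ lie on the boundary ray $(0,1)$ versus strictly inside (only the points $(0,b')$ with $b'\ge 0$ lie on that ray; all points $(a',b')$ with $a'>0$ are either interior or on the other ray $(a,b)$, and since $b>0$ one checks that $\mathbf Z_{>0}(a,b)$ contributes nothing to the issue because the relevant $b'$ in $\mu^*$ get shifted up by $na'>0$).

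Finally, for (3), I would use the toric description of the $G_n$-action on $R$. The left and right $\GG_m$-actions on $X_n^{a,b}$ are the torus action, and on the orbit $R\setminus\{\mathbf 0\}$ they act through characters determined by the ray $(0,1)$: an element $t\in T$ acts on the coordinate of $R\cong\mathbb{A}^1$ by the character of $T$ given by evaluating an interior monomial of minimal $x$-degree modulo $I(R)$. Concretely, the lattice point on the ray $(a,b)$, namely $(a,b)$ itself, restricts to a coordinate function on $R$ (it is nonzero on $R\setminus\{\mathbf 0\}$ and vanishes at $\mathbf 0$), and under the identification $\deg x=(1,0)$, $\deg y=(0,1)$ together with the $G_n$-weights $(\alpha,\tau)\cdot x = $ (shift by $\alpha$, scale appropriately) and $(\alpha,\tau)\cdot y = \tau^{-1}y$-type formulas from the proof of Theorem~\ref{t:monistor}, one reads off that left multiplication by $(\alpha,\tau)$ scales $x^ay^b|_R$ by $\tau^{b+an}$ and right multiplication scales it by $\tau^{b}$; the $\GG_a$-part acts trivially on $R$ because $\delta_l,\delta_r$ raise $x$-degree and hence land in $I(R)$ when restricted. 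Identifying the coordinate $x^ay^b|_R$ with the standard coordinate on $\mathbb{A}^1$ (which sends $\mathbf 0\mapsto 0$) then gives exactly the two displayed formulas. I would double-check the exponents $b+an$ and $b$ against the cone $\bar\sigma$-picture for $Y_n^{a,b}=(X_n^{a,b})^{op}$ as a consistency test, since opposite monoid swaps left and right actions and the two exponents should swap accordingly under $b\mapsto na+b$, which they do.
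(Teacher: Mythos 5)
The central problem is your identification of $R$. The vanishing locus of the ideal generated by the \emph{interior} characters of $\sigma^\vee(n,a,b)$ is $X\setminus T$, not $X\setminus G_n$: it is the union of the two orbit closures $V(\mathrm{cone}((1,0)))$ and $V(\mathrm{cone}((-b,a)))$ in $N$, and the first of these is the closure of $\{x=0\}\cap G_n$, which contains the identity element of the monoid. Equivalently, the ray $(0,1)$ of $\sigma^\vee$ is dual to the ray $(1,0)=\sigma_{G_n}$ of $\sigma_X$, so the orbit closure you name meets $G_n$ and cannot be $R$. The boundary divisor is $R=\{y=0\}=V(\mathrm{cone}((-b,a)))$, dual to the \emph{other} ray $(a,b)$ of $\sigma^\vee$; its ideal $I(R)$ is spanned by all characters $\chi^{(a',b')}$ with $ab'-ba'>0$, which strictly contains the interior ideal (for instance $y\in I(R)$), and $\KK[R]=\KK[\chi^{(a,b)}|_R]\cong\KK[t]$ because $\gcd(a,b)=1$. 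This error propagates into your part (2): since $\mathbf 0$ is the point where all nonconstant characters vanish, you must check $\mu^*(\chi^u)|_{R\times R}=0$ for \emph{every} nonconstant $u\in\sigma^\vee\cap M$, and the crucial case is exactly the boundary character $u=(a,b)$, the coordinate of $R$, which your restriction to ``interior monomials'' excludes. The computation does in fact work there: in $\sum_i\binom{a}{i}\,x^{a-i}y^{b+ni}\otimes x^iy^{b}$ the left factor pairs with $(-b,a)$ to $i(b+na)$ and the right factor to $b(a-i)$, so with $b>0$ each term has a factor in $I(R)$; but as written your argument does not establish (2). Smaller slips: at $i=0$ the term is $x^{a'}y^{b'}\otimes y^{b'}$ (your $x^{a'}y^{b'+na'}$ is the $i=a'$ left factor), and $\delta_l=\partial_x$, $\delta_r=y^n\partial_x$ \emph{lower} the $x$-degree; the $\GG_a$-factors act trivially on $R$ because the characters produced lie off the ray $\ZZ_{\ge 0}(a,b)$ and hence in $I(R)$, not because the degree is raised.

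Once $R$ and $I(R)$ are corrected, your strategy is viable and genuinely different from the paper's: you compute directly on $X_n^{a,b}$ in toric terms (orbit closure of the correct ray gives (1), the binomial expansion of \eqref{eq:comultipongn} gives (2), and reading off torus weights on $\chi^{(a,b)}|_R$ gives the exponents $b+an$ and $b$ in (3), with your $Y_n^{a,b}=(X_n^{a,b})^{op}$ consistency check being a nice touch). The paper instead verifies everything by an explicit coordinate computation in the smooth case $a=1$, where $X_n^{1,b}\cong\mathbb{A}^2$ and $R=\{\ty=0\}$ is immediate from the multiplication formula of Corollary~\ref{cor:affmonoids}, and then transports the statements to general $a$ through the quotient isomorphism $X_n^{a,b}\cong X_{an}^{1,b}/C_a$ of Proposition~\ref{p:monquot}, using that $\mathbb{A}^1$ modulo a cyclic group is again $\mathbb{A}^1$. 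Your route avoids the quotient step but requires the careful bookkeeping above about which characters cut out $R$.
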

\begin{proof}
    We first give a proof for monoids on affine spaces as in Corollary \ref{cor:affmonoids}.
    Then we use the isomorphism from Proposition \ref{p:monquot}.

    Recall that the surface $X_n^{a,b}$ is an affine plane if and only if $a=1$. In this case, the
    multiplication is given by 
    \[
        (\tx_1, \ty_1)\cdot (\tx_2, \ty_2) = (\tx_1\ty_2^b+\ty_1^{b+n}\tx_2,
        \ty_1\ty_2),
    \]
    for $(\tx_i, \ty_i)\in \mathbb{A}^2$. The subgroup $G_n$ is given by pairs $(\alpha,
    \tau)\in\mathbb{A}^2$ with nonzero $\tau$. The set of non-invertible elements is 
    \[
        R(X_n^{1,b})=\{(\tx, 0) \colon \tx \in \KK\},
    \]
    so $R(X_n^{1,b})\cong \mathbb{A}^1$. This proves (1), while (2) and (3) are obvious from the
    multiplication formula above. The zero is given by $\mathbf0 = (0,0)\in R$.

    To prove the general case consider the isomorphism $X_n^{a,b}\cong X_{an}^{1,b}/C_a$ from Proposition \ref{p:monquot}. We have 
    \[
        R(X_n^{a,b})=R(X_{an}^{1,b})/C_a\cong \mathbb{A}^1,
    \]
    because the quotient of $\mathbb{A}^1$ by a cyclic group is always isomorphic to $\mathbb{A}^1$
    itself. We also obtain (2) and (3), since the quotient preserves multiplication.
\end{proof}
Theorem \ref{t:noninv} says nothing about $X_n^{1,0}$. This case is special. As a surface
$X_n^{1,0}$ is isomorphic to $\mathbb{A}^2$ with multiplication
\[
    (\tx_1, \ty_1)\cdot (\tx_2, \ty_2) = (\tx_1+\ty_1^{n}\tx_2,
    \ty_1\ty_2).
\]
The subset of non-invertible elements is also given by 
\[
    R(X_n^{1,0})=\{(\tx, 0) \colon \tx \in \KK\},
\]
but the multiplication is
\[
    (\tx_1, 0)\cdot (\tx_2, 0) = (\tx_1, 0).
\]

The group $G_n\times G_n$ acts on $X_n^{a,b}$ by right and left multiplications. For $\tx\in
X_n^{a,b}$ and $(g,h)\in G_n$ we have $(g,h)\cdot \tx = g\tx h^{-1}$.
\begin{corollary}
    Let $X=X_n^{a,b}$ for positive integers $a,b$ with $\gcd(a,b)=1$.  There are three orbits of
    the action of $G_n\times G_n$ on $X$ for positive intgers: the group $G_n\subset X_n^{a,b}$,
    $R(X_n^{a,b})\setminus \{\mathbf{0}\}$, and $\{\mathbf{0}\}$. 
\end{corollary}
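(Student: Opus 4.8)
The plan is to produce the three $G_n\times G_n$-orbits from the already-established description of $X_n^{a,b}$. First I would recall from Theorem \ref{t:noninv} that, as a variety, $X_n^{a,b}=G_n\sqcup\bigl(R(X)\setminus\{\mathbf 0\}\bigr)\sqcup\{\mathbf 0\}$ is a disjoint union of three locally closed pieces, so it suffices to check that each of these three subsets is a single orbit. Since $G_n\hookrightarrow X$ is open and the left-right action restricted to $G_n$ is the usual left-right multiplication of $G_n$ on itself, which is transitive, $G_n$ is one orbit. The singleton $\{\mathbf 0\}$ is an orbit because $\mathbf 0$ is a zero element: $g\mathbf 0 h^{-1}=\mathbf 0$ for all $g,h\in G_n$ by part (2) of Theorem \ref{t:noninv}.

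The remaining point is that $R(X)\setminus\{\mathbf 0\}$ is a single orbit. Here I would use part (3) of Theorem \ref{t:noninv}: identifying $R(X)$ with $\mathbb A^1$ so that $\mathbf 0=0$, the left action of $(\alpha,\tau)\in G_n$ on $\tx\in R$ is $\tx\mapsto \tau^{b+an}\tx$ and the right action is $\tx\mapsto\tau^{b}\tx$. In either formula, as $\tau$ ranges over $\KK^\times$ the scalar $\tau^{b+an}$ (resp. $\tau^b$) ranges over all of $\KK^\times$ since $b+an>0$ and $b>0$ — this is exactly where the hypothesis $a,b>0$ enters and why $X_n^{1,0}$ is excluded. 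Hence the $G_n\times G_n$-action is transitive on the nonzero scalars $\mathbb A^1\setminus\{0\}=R(X)\setminus\{\mathbf 0\}$, so this is the third orbit.

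Finally I would note these three sets are pairwise distinct and exhaust $X$, and that no two of them can lie in a common orbit: $G_n$ consists of invertible elements while $R(X)$ does not, and within $R(X)$ the zero element is fixed by the whole action whereas the nonzero points are not. Therefore the orbit decomposition is precisely the one claimed. I do not anticipate a genuine obstacle here; the only thing to be careful about is invoking the positivity of both $a$ and $b$ to guarantee surjectivity of $\tau\mapsto\tau^{b+an}$ and $\tau\mapsto\tau^b$ onto $\KK^\times$, which is what makes $R(X)\setminus\{\mathbf 0\}$ a single orbit rather than a union of orbits.
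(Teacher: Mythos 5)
Your argument is correct and follows the paper's own route: the paper simply says the corollary follows immediately from Theorem \ref{t:noninv}(3), and your write-up spells out exactly that deduction (transitivity on $G_n$, the zero as a fixed point, and surjectivity of $\tau\mapsto\tau^{b}$ and $\tau\mapsto\tau^{b+an}$ onto $\KK^\times$ giving transitivity on $R\setminus\{\mathbf 0\}$). Your remark on where the positivity of $a,b$ enters, excluding the $X_n^{1,0}$ case, is a correct and useful clarification, not a deviation.
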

\begin{proof}
    It follows immediately from Theorem \ref{t:noninv}(3). 
\end{proof}

\medskip

\end{document}